\numberwithin{equation}{section}
\theoremstyle{plain}
\newtheorem{thm}{Theorem}[section]
\newtheorem{prop}[thm]{Proposition}
\newtheorem{cor}[thm]{Corollary}
\newtheorem{lemma}[thm]{Lemma}
\theoremstyle{definition}
\newtheorem{defi}[thm]{Definition}
\newtheorem{rem}[thm]{Remark}
\newcommand{\eps}{\varepsilon}
\begin{document}

\title[Wave propagation with irregular dissipation]
{Wave propagation with irregular dissipation and applications to acoustic problems and shallow waters}

\author[Juan Carlos Mu\~noz]{Juan Carlos Mu\~noz}
\address{
  Juan Carlos Mu\~noz:
  \endgraf
  Department of Mathematics
  \endgraf
  Universidad del Valle
  \endgraf
  Calle 13 Nro 100-00,  Cali
  \endgraf
  Colombia
  \endgraf
  {\it E-mail address} {\rm jcarlmz@yahoo.com}
  }

\author[Michael Ruzhansky]{Michael Ruzhansky}
\address{
  Michael Ruzhansky:
  \endgraf
  Department of Mathematics
  \endgraf
  Imperial College London
  \endgraf
  180 Queen's Gate, London, SW7 2AZ
  \endgraf
  United Kingdom
  \endgraf
  {\it E-mail address} {\rm m.ruzhansky@imperial.ac.uk}
  }
\author[Niyaz Tokmagambetov]{Niyaz Tokmagambetov}
\address{
  Niyaz Tokmagambetov:
  \endgraf
    al--Farabi Kazakh National University
  \endgraf
  71 al--Farabi ave., Almaty, 050040
  \endgraf
  Kazakhstan,
  \endgraf
   and
  \endgraf
    Department of Mathematics
  \endgraf
  Imperial College London
  \endgraf
  180 Queen's Gate, London, SW7 2AZ
  \endgraf
  United Kingdom
  \endgraf
  {\it E-mail address} {\rm n.tokmagambetov@imperial.ac.uk}
 }

\thanks{The authors were supported in parts by the EPSRC
grant EP/K039407/1 and by the Leverhulme Grant RPG-2014-02,
as well as by the MESRK grant 0773/GF4. Juan Carlos Mu\~noz was supported by Universidad del Valle (Colombia) and Colciencias under grant 1106-712-50006. No new data was collected or generated during the course of research. }

\date{\today}

\subjclass{42A85, 44A35.} \keywords{acoustic equation, shallow water, Cauchy problem, dissipative wave equation}

\begin{abstract}
In this paper we consider an acoustic problem of wave propagation through a discontinuous medium. The problem is reduced to the dissipative wave equation with distributional dissipation. We show that this problem has a so-called very weak solution, we analyse its properties and illustrate the theoretical results through some numerical simulations by approximating the solutions to the full dissipative model for a particular synthetic piecewise continuous medium. In particular, we discover numerically a very interesting phenomenon of the appearance of a new wave at the singular point.
For the acoustic problem this can be interpreted as an echo effect at the discontinuity interface of the medium.
\end{abstract}

\maketitle

\section{Introduction}

This work is devoted to the investigation of the 1D wave propagation through a medium with  positive piecewise regular density and wave speed functions.
For these non-smooth data we show that the problem has a so-called `very weak' solution. This notion has been introduced in \cite{GR15} in the analysis of second order hyperbolic equations, and in \cite{RT16} it was applied to show the well-posedness of the wave equations for the Landau Hamiltonian with irregular electro-magnetic fields. In this paper we use it to prove the well-posedness of the acoustic problem. Moreover, it allows us to derive the decay properties in time also in the situation when the medium has discontinuities.
Incidentally, the same model equation (see \eqref{EQ-AP:01-0}) appears also in the shallow water equations (see \eqref{EQ:Bo}) as a special case of the linear Boussinesq system, so the obtained results apply in that situation as well.

We start with a description of the physical problem.
Now, let $t$ denote the time and let $z$ be the Cartesian coordinate in the direction of the wave propagation. Let $\rho$ denote the density and $c$ the wave speed of the medium. Following the derivation in \cite{BB95}, we obtain the first order hyperbolic system with the constitutive equation and the momentum equation:
\begin{equation}\label{EQ: 1D-LHS-C&M eq-ns}
p_{t}+\rho c^{2}\omega_{z}=0, \,\,\,\,\,\,\,\,  \rho\omega_{t}+p_{z}=0,
\end{equation}
where $p=p(z, t)$ is the $z$--component of traction across surfaces $z=C$ ($C$ -- positive in compression), and $\omega=\omega(z,t)$ is the $z$--component of particle velocity at the point $(z, t)$. The first equation in \eqref{EQ: 1D-LHS-C&M eq-ns} is Hooke's law differentiated in $t$. By denoting $$x=\int_{0}^{z}\frac{1}{c(s)}ds,$$
we rewrite \eqref{EQ: 1D-LHS-C&M eq-ns} with respect to the impedance $\zeta(x)=\rho(z)c(z)$
(see \cite{BB83} for the detailed argument) and obtain:
\begin{equation}\label{EQ: 1D-LHS-C&M eq-ns-2}
p_{t}+\zeta\omega_{x}=0, \,\,\,\,\,\,\,\,  \zeta\omega_{t}+p_{x}=0.
\end{equation}
Putting together equations \eqref{EQ: 1D-LHS-C&M eq-ns-2}, we get the equation
\begin{equation}\label{EQ: 1D-LHS-C&M eq-ns-3}
\omega_{tt}-\frac{\zeta'(x)}{\zeta(x)}\omega_{x}-\omega_{xx}=0.
\end{equation}
Putting the initial conditions at $x=0$, the problem \eqref{EQ: 1D-LHS-C&M eq-ns-2}
was analysed in \cite{BB95} giving a rigorous estimate of the error occuring in making the O'Doherty--Anstey approximation, originally derived in 1971 in the context of acoustic wave propagation in the earth's crust \cite{ODA}. Furthermore, in \cite{BB95} it was shown that the down-going wave component $D=\zeta^{1/2} \omega + \zeta^{-1/2} p$ decays as $x\to\infty$ (i.e.  large propagation distance) for smooth positive functions $\rho$ and $c$, so that $\zeta$ is also smooth and positive. An extensive physical discussion of this kind of model equations was done in \cite{BB95}, \cite{BB83}, \cite{CF94}, \cite{LB96}, \cite{PS00}, \cite{AKPP}. Extension of the O'Doherty--Anstey approximation for weakly-dispersive, weakly nonlinear water waves propagating on the surface of a shallow channel with a random depth was developed in \cite{MN04}, \cite{GMN07}, and \cite{MN05}.

In this paper we are interested in the problem of existence of solutions of the model equation \eqref{EQ: 1D-LHS-C&M eq-ns-3} in the situation when the density $\rho$ and the wave speed $c$ of the medium are irregular. For example, we want to allow them to be discontinuous or, in general, have even less regularity. At the same time, it is natural, from the physical meaning of these functions, to continue assuming that they are positive:
$$
\rho>0,\; c>0 \textrm{ so that also } \zeta=\rho c>0.
$$
The first idea in our analysis is to observe that if we change the roles of $t$ and $x$, the equation \eqref{EQ: 1D-LHS-C&M eq-ns-3} takes the form of the dissipative wave equation. Indeed, swapping variables $x$ and $t$, and denoting $u:=\omega$ and $b:=\zeta$, the boundary value problem \eqref{EQ: 1D-LHS-C&M eq-ns-3} is reduced to the Cauchy problem for the one dimensional acoustic equation
\begin{equation}\label{EQ-AP:01-0}
\left\{ \begin{split}
\partial_{t}^{2}u(t,x)-\partial_{x}^{2}u(t,x)+\frac{b'(t)}{b(t)}\partial_{t}u(t,x)&=0, \; (t,x)\in [0, \infty)\times \mathbb R,\\
u(0,x)&=u_{0}(x), \; x\in \mathbb R, \\
\partial_{t}u(0,x)&=u_{1}(x), \; x\in \mathbb R,
\end{split}
\right.
\end{equation}
with positive $b>0$.
Now, even if $b$ is a regular function, the dissipation speed $\frac{b'(t)}{b(t)}$ has to be positive if we want to have the decay of solutions for large $t$. Indeed, this corresponds to the system losing energy (rather than gaining it from outside) and is a natural physical assumption. Since $b>0$, this means that we should also have $b'>0$. From the theory of distributions we know that these conditions would imply that $b$ and $b'$ are positive Radon measures, so that it is natural to assume that $b$ is a piecewise continuous (and hence also increasing) function.

Therefore, these will be the assumptions for our analysis, namely, we assume that the product $b=\zeta=\rho c$ of the density and the wave speed of the medium is an increasing piecewise continuous function. If they are smooth (or at least $C^{1}$), this is a natural physical assumption (see \cite{BB95}). Thus, the main novelty of this paper is that we relax the regularity assumption
\begin{equation}\label{EQ:as}
\textrm{\em requiring only that $b$ is piecewise continuous, positive and increasing}.
\end{equation}
While physically this is a very reasonable setting, mathematically we face several problems:

\begin{itemize}
\item since $b'$ would contain delta-functions at the discontinuity points of $b$, the coefficient $\frac{b'(t)}{b(t)}$ is not well-defined as a distribution;
\item moreover, if the data $u_{0}$ and $u_{1}$ are irregular, the dissipation term $\frac{b'(t)}{b(t)}\partial_{t}u(t,x)$ also does not make sense as a distribution in view of the celebrated Schwartz' impossibility result in \cite{S54} on multiplication of distributions.
\end{itemize}
Nevertheless, in this paper we analyse the equation \eqref{EQ-AP:01-0} or, more generally \eqref{EQ-AP:01}, under the assumption \eqref{EQ:as}. In particular, we show that it is well-posed in the sense of very weak solutions introduced in \cite{GR15}, and then also used in \cite{RT16} in another context. In particular, we show that

\begin{itemize}
\item if the Cauchy data $(u_0, u_1)$ is in the Sobolev spaces ${H}^{s+1}\times {H}^{s}$, $s\geq 0$, then the Cauchy problem \eqref{EQ-AP:01-0} has a very weak solution of order $s$; the very weak solution is unique in an appropriate sense;
\item if $b\in C^{1}$ we know that
the Cauchy problem \eqref{EQ-AP:01-0} also has a classical solution in
$C([0, \infty),{H}^{s+1}) \cap C^1([0, \infty),{H}^{s})$; in this case the very weak solution recaptures the classical solution;
\item we also give the above results for the Cauchy data $(u_0, u_1)$ in the Sobolev spaces ${H}^{s+1}\times {H}^{s}$ for any $s\in\mathbb R$;
\item under assumption \eqref{EQ:as}, the very weak solution is uniformly bounded in $L^{2}$ and may decay in $t$ depending on further properties of $b$.
\end{itemize}
The last property of the decay depending on further properties of $b$ is natural even for smooth functions $b$, see \cite{W04, W06,W07}. We note that if $b$ is regular, we could write $\frac{b'(t)}{b(t)}=(\log b(t))'$, which could be interpreted as a measure since $\log b(t)$ is a function of bounded variation. However, such a relation appears to be only formal since the quotient $\frac{b'(t)}{b(t)}$ is not well-defined even as a distribution. This also gives a novelty compared to the setting of \cite{GR15} since the coefficients of the wave equations there were assumed to be distributions. We also note that for functions $b$ more regular than $C^1$, there are many results available, for the well-posedness in Gevrey spaces and in spaces of ultradistributions, see e.g.
\cite{Colombini-deGiordi-Spagnolo-Pisa-1979, DS, KS} and references therein, to mention only very few.

\medskip

Let us mention that the equation \eqref{EQ-AP:01-0} also appears in the modelling of the non-dispersive water wave propagation in shallow water. Namely, the linear case of the Boussinesq systems takes the form
\begin{equation}\label{EQ:Bo}
\begin{split}
M(\xi) \eta_t + u_\xi = 0, \\
u_t + \eta_\xi = 0.
\end{split}
\end{equation}
Indeed, this is the non-dispersive case of the system analysed in \cite[(2.5)]{MN04}
(the case of $\beta=0$) to which we refer to further physical details. The function $M$ is determined by the channel depth and local wave speed, and is discontinuous in channels with sudden changes in the depth.

Again, with the change of variable
$$
x= \int_0^\xi \frac{1}{C_0(s) } ds,
$$
where $C_0(x)= \sqrt{1/M(x)},$  we arrive at the equation
$$
u_{tt} - \frac{C_0'(x)}{C_0(x)} u_x - u_{xx} = 0,
$$
which is the same model \eqref{EQ: 1D-LHS-C&M eq-ns-3}, where the coefficient $C_0$ (the local wave speed) has the role of the impedance.

\medskip
{\bf Numerics.} In Section \ref{Numerics} we make a numerical modelling of the problem with $b(t)$ having a jump discontinuity. We see that the approximating technique of very weak solutions does allow us to recover some physically expected behaviour such as the decay of solutions for large times, making such considerations mathematically rigorous. Moreover, we observe a very interesting phenomenon of the appearance of a new wave after the singular time travelling in the direction opposite to the main one.
{\em For the original acoustic problem this can be interpreted as an echo effect at the discontinuities of the medium.}
Such phenomenon is known in the presence of multiple characteristics in hyperbolic equations and is related to conical refraction, see e.g. \cite{MU79} or \cite{L93} for different descriptions. In that case the newly appearing wave is weaker, see e.g. \cite{KR07}, which is consistent with the observed pictures. The difference, however, is that in our case such behaviour is not related to multiple characteristics but to the singularity in the dissipation coefficient. Indeed, we analyse it further in Figures \ref{propagationU3}-\ref{propagationU5} by modelling a delta distribution as the Cauchy data. We see that the second wave is not more regular than the main one: it appears to be the second delta function (thus, singularity of comparable strength to the first one) but smaller in amplitude.

\medskip

In Section \ref{SEC:main} we discuss the known results in the case when $b$ is regular, and then formulate our results concerning very weak solutions for irregular $b$. We also consider the problem in higher dimensions, which is of independent mathematical interest. In Section \ref{SEC:pf1} we prove the results. In Section \ref{SEC:distr} we briefly discuss the case of distributional Cauchy data. Finally, in Section \ref{Numerics} we include some numerical experiments to illustrate the theoretical results derived in this paper for a particular synthetic piecewise continuous impedance function $\zeta$.

\section{Main Results}
\label{SEC:main}

In this work we can deal with the Cauchy problem \eqref{EQ-AP:01-0} in $\mathbb R^{n}$ since our approach works in any dimension $n\geq 1$. More precisely, we can consider
\begin{equation}\label{EQ-AP:01}
\left\{ \begin{split}
\partial_{t}^{2}u(t,x)-\Delta_{x} u(t,x)+\frac{b'(t)}{b(t)}\partial_{t}u(t,x)&=0, \; (t,x)\in [0, \infty)\times \mathbb R^{n},\\
u(0,x)&=u_{0}(x), \; x\in \mathbb R^{n}, \\
\partial_{t}u(0,x)&=u_{1}(x), \; x\in \mathbb R^{n},
\end{split}
\right.
\end{equation}
where $\Delta$ is the Laplace operator in $\mathbb R^{n}$. From the point of view of the acoustic problem the case $n=1$ is relevant, but mathematically the equation \eqref{EQ-AP:01} with irregular $b$ as in \eqref{EQ:as} is of independent interest.

We start by briefly recalling the notion of very weak solutions. For this, we do not need the assumption on the continuity of $b$ since it is available in a much more general context.

Thus, let $b$ be a positive distribution, i.e. there is a constant $b_{0}>0$ such that $b\geq b_{0}>0$. Here $b\geq b_{0}$ means that $b-b_{0}\geq0$, or $\langle b-b_{0}, \psi\rangle\geq0$ for all $\psi\in C^\infty_0(\mathbb R)$, $\psi\ge 0$.

As already mentioned, we will be using the notion of very weak solutions which was formulated for wave equations with spacially constant coefficients in \cite{GR15}, and applied to the wave equation for the Landau Hamiltonian in \cite{RT16}. We start by regularising the distributional coefficient $b$ with a suitable mollifier $\psi$ generating families of smooth functions $(b_{\eps})_\eps$, namely
\begin{equation}\label{EQ:regs}
b_{\eps}=b\ast\psi_{\omega(\eps)},
\end{equation}
where  $$\psi_{\omega(\eps)}(t)=\omega(\eps)^{-1}\psi(t/\omega(\eps))$$ and $\omega(\eps)$ is a positive function converging to $0$ as $\eps\to 0$ to be chosen later (sometimes we need a particular behaviour in $\eps$, see \cite{GR15} and \cite{RT16} but the situation for \eqref{EQ-AP:01} is simpler).
Here $\psi$ is a Friedrichs--mollifier, i.e.  $\psi\in C^\infty_0(\mathbb R)$, $\psi\ge 0$ and $\int\psi=1$. It follows that the net $(b_{\eps})_\eps$ is $C^\infty$-\emph{moderate}, in the sense that its $C^\infty$-seminorms can be estimated by a negative power of $\eps$. More precisely, following \cite{GR15}, we will use the notions of moderateness:
\begin{defi}
\label{def_mod_intro}
\leavevmode
\begin{itemize}
\item[(i)]
A net of functions $(f_\eps)_{\eps\in (0,1]}\subset C^\infty(\mathbb R)$
is said to be $C^\infty$-moderate if for all $K\Subset\mathbb R$ and for all $\alpha\in\mathbb N_{0}$ there exist $N=N_{\alpha}\in\mathbb N_{0}$ and $c=c_{\alpha}>0$ such that
\[
\sup_{t\in K}|\partial^\alpha f_\eps(t)|\le c\eps^{-N-\alpha}
\]
holds for all $\eps\in(0,1]$.
\item[(ii)] A net of functions $(u_\eps)_{\eps\in (0,1]}\subset C^{k}([0, \infty);{H}^{s-k})$ for all $k\in\mathbb N_0$ is said to be $C^\infty([0, \infty);{H}^{s})$-moderate if there exist $N\in\mathbb N_{0}$ and for all $k\in\mathbb N_{0}$ there is $c_k>0$  such that
\[
\|\partial_t^k u_\eps(t,\cdot)\|_{H^{s-k}}\le c_k\eps^{-N-k},
\]
for all $t\in[0, \infty)$ and $\eps\in(0,1]$.
\end{itemize}
\end{defi}
Here and in the sequel, the notation $K\Subset\mathbb R$ means that $K$ is
a compact set in $\mathbb R$.

We note that the conditions of moderateness are natural in the sense that regularisations of distributions are moderate, namely we can regard
\begin{equation}\label{EQ:incls}
\textrm{ compactly supported distributions } \mathcal{E}'(\mathbb R)\subset \{C^\infty \textrm{-moderate families}\}
\end{equation}
by the structure theorems for distributions.
Following and adapting \cite{GR15}, we now define the notion of a `very weak solution' for the Cauchy problem \eqref{EQ-AP:01}:

\begin{defi}
\label{def_vws}
Let $s\in\mathbb R$ and $(u_{0},u_{1})\in {H}^{s}\times {H}^{s-1}$. The net $(u_\eps)_\eps\in C^\infty([0, \infty);{H}^{s})$ is {\em a very weak solution of order $s$} of the
Cauchy problem \eqref{EQ-AP:01} if there exists
\begin{itemize}
\item[]
a $C^\infty$-moderate regularisation $b_{\eps}$ of the coefficient $b$,
\end{itemize}
such that $(u_\eps)_\eps$ solves the regularised problem
\begin{equation}\label{EQ: CPbb}
\left\{ \begin{split}
\partial_{t}^{2}u_{\eps}(t,x)-\Delta_{x}u_{\eps}(t,x)+\frac{b'_{\eps}(t)}{b_{\eps}(t)}\partial_{t}u_{\eps}(t,x)&=0, \; (t,x)\in [0, \infty)\times \mathbb R^{n},\\
u_{\eps}(0,x)&=u_{0}(x), \; x\in \mathbb R^{n}, \\
\partial_{t}u_{\eps}(0,x)&=u_{1}(x), \; x\in \mathbb R^{n},
\end{split}
\right.
\end{equation}
for all $\eps\in(0,1]$, and is $C^\infty([0, \infty);{H}^{s})$-moderate.
\end{defi}

In the sequel, the proofs of the main statements (and especially the decay of the very weak solutions) will depend on behaviour of $b'_{\eps}(t)/b_{\eps}(t)$ at $t\rightarrow\infty$. In order to put the problem in perspective and for our subsequent use, we consider several cases.
First, when for a fixed $\eps>0$, we have $t(b'_{\eps}(t)/b_{\eps}(t))\rightarrow\infty$ as $t\rightarrow\infty$ then we will use the following result following from \cite{W07}:

\begin{thm}\label{TH: W07-1}
Let $\eps>0$.
If $t(b'_{\eps}(t)/b_{\eps}(t))\rightarrow\infty$ as $t\rightarrow\infty$, then
the solution to \eqref{EQ: CPbb} and its derivatives satisfy the Matsumura-type estimates
\begin{equation}\label{EQ: W07.01}
\|\partial_{x}^{\alpha}u_{\eps}\|_{L^{2}}\lesssim\left(1+\int\limits_{0}^{t} \frac{b_{\eps}(\tau)}{b'_{\eps}(\tau)}d\tau\right)^{-|\alpha|/2}(\|u_{0}\|_{H^{s+|\alpha|}}+\|u_{1}\|_{H^{s+|\alpha|-1}})
\end{equation}
and
\begin{equation}\label{EQ: W07.02}
\|\partial_{t}\partial_{x}^{\alpha}u_{\eps}\|_{L^{2}}\lesssim\frac{b_{\eps}(t)}{b'_{\eps}(t)}\left(1+\int\limits_{0}^{t} \frac{b_{\eps}(\tau)}{b'_{\eps}(\tau)}d\tau\right)^{-|\alpha|/2-1}(\|u_{0}\|_{H^{s+|\alpha|+1}}+\|u_{1}\|_{H^{s+|\alpha|}})
\end{equation}
as $t\to\infty$ for any $s>0$ and for all $\alpha\in\mathbb N_{0}^{n}$.
\end{thm}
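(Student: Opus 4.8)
The plan is to recognise, for each fixed $\eps\in(0,1]$, the regularised Cauchy problem \eqref{EQ: CPbb} as a wave equation with time-dependent \emph{effective} dissipation and to invoke the corresponding decay estimates from \cite{W07}. First I would record the structural properties of $b_\eps$: since $b$ is a positive distribution, $b_\eps=b\ast\psi_{\omega(\eps)}$ is smooth and satisfies $b_\eps\ge b_0>0$; since $b$ is increasing, $b'$ is a nonnegative measure, whence $b_\eps'=b'\ast\psi_{\omega(\eps)}\ge 0$, and therefore the dissipation coefficient
\[
\beta_\eps(t):=\frac{b_\eps'(t)}{b_\eps(t)}=(\log b_\eps)'(t)
\]
is a smooth, nonnegative function of $t$. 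Since the statement is for each fixed $\eps$, the implicit constants in $\lesssim$ are allowed to depend on $\eps$.

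Next I would pass to the partial Fourier transform in $x$: writing $v_\eps(t,\xi)=\widehat{u_\eps}(t,\xi)$, the problem becomes the family of damped oscillator equations $\partial_t^2 v_\eps+\beta_\eps(t)\,\partial_t v_\eps+|\xi|^2 v_\eps=0$ with data $\widehat{u_0}(\xi),\widehat{u_1}(\xi)$, one for each $\xi$. The hypothesis $t\beta_\eps(t)\to\infty$ is precisely the effectiveness condition: at large times the dynamics is governed by the first-order part $\beta_\eps(t)\partial_t v_\eps+|\xi|^2 v_\eps=0$, solved by $\widehat{u_0}(\xi)\exp\!\big(-|\xi|^2\!\int_0^t \tfrac{b_\eps(\tau)}{b_\eps'(\tau)}\,d\tau\big)$, so the natural (parabolic) time is $\Lambda_\eps(t):=1+\int_0^t \tfrac{b_\eps(\tau)}{b_\eps'(\tau)}\,d\tau$, exactly the quantity in \eqref{EQ: W07.01}--\eqref{EQ: W07.02}.

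The core of the argument, which I would take over from \cite{W07}, is a zone decomposition of $(t,\xi)$-space into a hyperbolic zone, where $|\xi|$ dominates $\beta_\eps(t)$, and a dissipative zone, where $\beta_\eps(t)$ dominates $|\xi|$, separated by the curve $|\xi|\,\Lambda_\eps(t)\sim 1$. In the hyperbolic zone one diagonalises the associated first-order system and runs a WKB/energy estimate, gaining the exponential factor $b_\eps(0)/b_\eps(t)$; in the dissipative zone one reduces to a heat-type equation in the variable $\Lambda_\eps$, getting $|v_\eps(t,\xi)|\lesssim e^{-c|\xi|^2\Lambda_\eps(t)}(|\widehat{u_0}(\xi)|+\ldots)$ and a further factor $\tfrac{b_\eps(t)}{b_\eps'(t)}\Lambda_\eps(t)^{-1}$ for $\partial_t v_\eps$ coming from $\partial_t\Lambda_\eps=\tfrac{b_\eps}{b_\eps'}$. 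Multiplying the pointwise bounds by $|\xi|^{2|\alpha|}$, taking the supremum over $\xi$ of the frequency weights (which turns $e^{-c|\xi|^2\Lambda_\eps(t)}$ into the powers $\Lambda_\eps(t)^{-|\alpha|/2}$, respectively $\Lambda_\eps(t)^{-|\alpha|/2-1}$), and using Plancherel to integrate in $\xi$ yields \eqref{EQ: W07.01} and \eqref{EQ: W07.02}; the extra derivatives on the data on the right-hand side absorb the high-frequency, hyperbolic-zone contribution.

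The hard part is not the zone analysis itself but checking that $\beta_\eps$ satisfies the hypotheses needed to run it: symbol-type bounds of the form $|\beta_\eps^{(k)}(t)|\lesssim \beta_\eps(t)(1+t)^{-k}$ and the condition that distinguishes the genuinely effective regime from overdamping. This is where the specific structure $\beta_\eps=(\log b_\eps)'$ and the monotonicity of $b_\eps$ are used, and where the assumption $t\beta_\eps(t)\to\infty$ does most of the work; on bounded time intervals no decay is claimed, and every quantity in \eqref{EQ: W07.01}--\eqref{EQ: W07.02} is finite there by the standard energy inequality for the smooth-coefficient problem \eqref{EQ: CPbb}, so only the large-$t$ behaviour of $\beta_\eps$ needs to be controlled. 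Once these hypotheses are verified, \eqref{EQ: W07.01}--\eqref{EQ: W07.02} are exactly the Matsumura-type estimates of \cite{W07} specialised to this coefficient.
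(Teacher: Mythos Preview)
Your proposal is correct and takes essentially the same approach as the paper: the paper's entire justification for Theorem~\ref{TH: W07-1} is the single sentence ``Theorem~\ref{TH: W07-1} follows from \cite[Result 1]{W07} by setting $p=q=2$,'' i.e.\ a direct black-box citation of Wirth's effective-dissipation result. You go further than the paper by sketching the zone-decomposition/WKB argument internal to \cite{W07} and by flagging the symbol-type hypotheses on $\beta_\eps$ that a careful application of \cite{W07} would require, but the underlying strategy---identify $\beta_\eps=b_\eps'/b_\eps$ as an effective dissipation and read off the Matsumura-type $L^2$ estimates from \cite{W07}---is identical.
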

Theorem \ref{TH: W07-1} follows from \cite[Result 1]{W07} by setting $p=q=2$.
Integrals in \eqref{EQ: W07.01} and \eqref{EQ: W07.02} are well--defined since $b_{\eps}'$ is also a positive function which will be discussed below.

\vspace{3mm}

In the cases when $t(b'_{\eps}(t)/b_{\eps})=O(1)$ or $t(b'_{\eps}(t)/b_{\eps})=o(1)$ as $t\rightarrow\infty$ we will use the following results from \cite{W06}:

\begin{thm}\label{TH: W06-1}
Let $\eps>0$. Assume that $t(b'_{\eps}(t)/b_{\eps})=O(1)$ as $t\rightarrow\infty$.
Then the solution to \eqref{EQ: CPbb} satisfies the estimates
\begin{equation}\label{EQ: W06.01}
\|u_{\eps}\|_{L^{2}}\lesssim\left(b_{\eps}(t)\right)^{-1}(\|u_{0}\|_{H^{s}}+\|u_{1}\|_{H^{s-1}})
\end{equation}
and
\begin{equation}\label{EQ: W06.02}
\|(\partial_{t}, \nabla_{x})u_{\eps}\|_{L^{2}}\lesssim\left(b_{\eps}(t)\right)^{-1+\frac{1}{2}}(\|u_{0}\|_{H^{s+1}}+\|u_{1}\|_{H^{s}})
\end{equation}
as $t\to\infty$ for arbitrary $s>0$.
\end{thm}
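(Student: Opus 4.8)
The plan is to reduce the estimates \eqref{EQ: W06.01}--\eqref{EQ: W06.02} to the scalar (in the frequency variable) asymptotic analysis of \cite{W06}, in the same spirit as the reduction used for Theorem \ref{TH: W07-1}. First I would record the elementary facts that for each fixed $\eps\in(0,1]$ the regularised coefficient $b_\eps=b\ast\psi_{\omega(\eps)}$ is smooth, satisfies $b_\eps(t)\ge b_0>0$ (because $b\ge b_0$ and $\psi\ge 0$), and has a nonnegative derivative $b'_\eps=b'\ast\psi_{\omega(\eps)}\ge 0$ (because $b$ is increasing, so $b'$ is a nonnegative Radon measure, and $\psi\ge 0$). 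Hence $\nu_\eps(t):=b'_\eps(t)/b_\eps(t)=(\log b_\eps(t))'$ is a well-defined nonnegative smooth function, i.e. a genuine time-dependent dissipation coefficient, and the hypothesis $t\,\nu_\eps(t)=O(1)$ places \eqref{EQ: CPbb} into the non-effective dissipation regime studied in \cite{W06}.

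Next I would apply the partial Fourier transform in $x$. Setting $v(t,\xi)=\widehat{u_\eps}(t,\xi)$, the Cauchy problem \eqref{EQ: CPbb} becomes the family, parametrised by $\xi\in\mathbb R^n$, of ordinary differential equations $v_{tt}+\nu_\eps(t)\,v_t+|\xi|^2 v=0$ with $v(0,\xi)=\widehat{u_0}(\xi)$, $v_t(0,\xi)=\widehat{u_1}(\xi)$. The heart of the proof is then to invoke the representation of solutions and the corresponding energy estimates of \cite{W06} for this scalar equation, taken with $p=q=2$: these give bounds for $|v(t,\xi)|$ and for $|(v_t,|\xi|v)(t,\xi)|$ that are explicit in $|\xi|$ and carry the decay factors $b_\eps(t)^{-1}$ and $b_\eps(t)^{-1/2}$ respectively. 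Squaring, multiplying by the weight $\langle\xi\rangle^{2s}$, integrating over $\xi\in\mathbb R^n$, and using Plancherel's theorem then converts these pointwise bounds into \eqref{EQ: W06.01} and \eqref{EQ: W06.02}. As is typical in this type of analysis, the low-frequency zone ($|\xi|$ small relative to $\nu_\eps(t)$) and the high-frequency (hyperbolic) zone have to be treated separately, since the decay is produced by different mechanisms, but both are already incorporated in the statements of \cite{W06}.

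The step I expect to be the main obstacle is checking that the mollified coefficient $\nu_\eps$ satisfies \emph{all} of the structural hypotheses of the relevant result of \cite{W06}, and not merely $t\,\nu_\eps(t)=O(1)$: typically one also needs symbol-type estimates $|\nu_\eps^{(k)}(t)|\le C_{k,\eps}\,\nu_\eps(t)\,(1+t)^{-k}$ for $k=1,2$, some monotonicity or stabilisation of $t\,\nu_\eps(t)$, and $\nu_\eps\notin L^1(0,\infty)$ (in the integrable case one is in the scattering regime, $b_\eps$ remains bounded, and the estimates merely assert boundedness rather than decay). Here it is crucial that $\eps$ is fixed, so all implicit constants may depend on $\eps$; what has to be verified is that these additional properties are inherited from the assumption that $b$ is positive, increasing and piecewise continuous, or else that their failure makes the asserted estimate trivially true. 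Once this verification is carried out, the remaining Fourier-side bookkeeping is routine, and the weights $b_\eps(t)^{-1}$ and $b_\eps(t)^{-1/2}$ appearing in \eqref{EQ: W06.01}--\eqref{EQ: W06.02} are unambiguous because $b_\eps\ge b_0>0$.
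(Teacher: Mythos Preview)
The paper does not actually prove Theorem \ref{TH: W06-1}: it is stated as a direct quotation of a known result from \cite{W06} (Wirth, non-effective dissipation), just as Theorem \ref{TH: W07-1} is quoted from \cite{W07} with the single remark ``follows from \cite[Result 1]{W07} by setting $p=q=2$''. Your proposal --- Fourier transform in $x$, reduce to the scalar ODE $v_{tt}+\nu_\eps(t)v_t+|\xi|^2 v=0$, and invoke the $L^2$--$L^2$ estimates of \cite{W06} --- is precisely the reduction that underlies such a citation, so in spirit you are doing exactly what the paper does, only with more detail than the authors supply.

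One remark: the extra work you flag as the ``main obstacle'' (verifying that $\nu_\eps$ inherits the symbol-type hypotheses of \cite{W06} from the piecewise-continuity and monotonicity of $b$) is not something the paper carries out or even addresses; the authors simply treat Theorem \ref{TH: W06-1} as a black box imported from \cite{W06} for each fixed $\eps$. So while your caution is well placed mathematically, it goes beyond what the paper itself does for this statement.
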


We will also need the following extension of Theorem \ref{TH: W06-1}, keeping its assumptions:

\begin{cor}\label{COR: W06-1}
The solution to \eqref{EQ: CPbb} satisfies the estimates
\begin{equation}\label{EQ: W06.03}
\|\partial_{t}^{l}\partial_{x}^{\alpha}u_{\eps}\|_{L^{2}}\lesssim\left(b_{\eps}(t)\right)^{-1+\frac{l}{2}}
(\|u_{0}\|_{H^{s+|\alpha|+l}}+\|u_{1}\|_{H^{s+|\alpha|+l-1}})
\end{equation}
as $t\to\infty$, for arbitrary $\alpha\in\mathbb N_{0}^{n}$ for all $s>0$ and for $l=0, 1$.
\end{cor}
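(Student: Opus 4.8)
The plan is to deduce the corollary from Theorem \ref{TH: W06-1} by exploiting the invariance of the regularised equation \eqref{EQ: CPbb} under spatial differentiation. The dissipation coefficient $b'_{\eps}(t)/b_{\eps}(t)$ depends only on $t$, so the operator $\partial_{x}^{\alpha}$ commutes with every term of \eqref{EQ: CPbb}. Hence, for any $\alpha\in\mathbb N_{0}^{n}$, the net $v_{\eps}:=\partial_{x}^{\alpha}u_{\eps}$ solves the very same Cauchy problem \eqref{EQ: CPbb}, only with the Cauchy data $(u_{0},u_{1})$ replaced by $(\partial_{x}^{\alpha}u_{0},\partial_{x}^{\alpha}u_{1})$. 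Since the hypothesis $t(b'_{\eps}(t)/b_{\eps}(t))=O(1)$ and the restriction $s>0$ are carried over unchanged into the corollary, Theorem \ref{TH: W06-1} applies verbatim to $v_{\eps}$.

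First I would apply estimate \eqref{EQ: W06.01} to $v_{\eps}$, which yields
\[
\|\partial_{x}^{\alpha}u_{\eps}\|_{L^{2}}=\|v_{\eps}\|_{L^{2}}\lesssim (b_{\eps}(t))^{-1}\bigl(\|\partial_{x}^{\alpha}u_{0}\|_{H^{s}}+\|\partial_{x}^{\alpha}u_{1}\|_{H^{s-1}}\bigr),
\]
and then apply \eqref{EQ: W06.02} to $v_{\eps}$, bounding $\|\partial_{t}\partial_{x}^{\alpha}u_{\eps}\|_{L^{2}}\le\|(\partial_{t},\nabla_{x})v_{\eps}\|_{L^{2}}$, which yields
\[
\|\partial_{t}\partial_{x}^{\alpha}u_{\eps}\|_{L^{2}}\lesssim (b_{\eps}(t))^{-1/2}\bigl(\|\partial_{x}^{\alpha}u_{0}\|_{H^{s+1}}+\|\partial_{x}^{\alpha}u_{1}\|_{H^{s}}\bigr).
\]
It then only remains to absorb the spatial derivatives falling on the data into the Sobolev order by the elementary inequality $\|\partial_{x}^{\alpha}w\|_{H^{\sigma}}\le\|w\|_{H^{\sigma+|\alpha|}}$, immediate from the Fourier-multiplier description of $H^{\sigma}$. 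Substituting this into the two displays above turns them into exactly \eqref{EQ: W06.03} for $l=0$ and $l=1$, respectively (recall $-1+\tfrac{l}{2}$ gives $-1$ and $-\tfrac12$).

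There is no genuine obstacle here: the argument is pure bookkeeping, and the only point worth a remark is that the implicit constants in Theorem \ref{TH: W06-1} do not depend on the Cauchy data — they depend on $b_{\eps}$ only through its $O(1)$-profile — so replacing $(u_{0},u_{1})$ by $(\partial_{x}^{\alpha}u_{0},\partial_{x}^{\alpha}u_{1})$ preserves the estimate with the same constant. In particular the loss of $|\alpha|$ derivatives is confined to the data norms and does not affect the $t$-decay rate, which is the content of the corollary.
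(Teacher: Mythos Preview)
Your proposal is correct and follows essentially the same approach as the paper: both arguments exploit that the coefficient $b'_{\eps}/b_{\eps}$ is $x$-independent, so $\partial_{x}^{\alpha}u_{\eps}$ solves \eqref{EQ: CPbb} with differentiated data, and then apply Theorem \ref{TH: W06-1} to this new solution. Your write-up is slightly more explicit about the Sobolev embedding $\|\partial_{x}^{\alpha}w\|_{H^{\sigma}}\le\|w\|_{H^{\sigma+|\alpha|}}$ and about the data-independence of the implicit constants, but the argument is the same.
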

\begin{proof}
Fix $s>0$ and $\alpha\in\mathbb N_{0}^{n}$. For $(u_{0}, u_{1})\in H^{s+|\alpha|}\times H^{s+|\alpha|-1}$, let us introduce new initial data by the formulae
$$
v_{0}:=\partial^{\alpha}u_{0}\in H^{s}, \,\,\,\,\, v_{1}:=\partial^{\alpha}u_{1}\in H^{s-1}.
$$
Then for these functions we consider the Cauchy problem
\begin{equation}\label{EQ-AP:02}
\left\{ \begin{split}
\partial_{t}^{2}v_{\eps}(t,x)-\Delta_{x}v_{\eps}(t,x)+\frac{b_{\eps}'(t)}{b_{\eps}(t)}\partial_{t}v_{\eps}(t,x)&=0, \; (t,x)\in [0, \infty)\times \mathbb R^{n},\\
v_{\eps}(0,x)&=v_{0}(x), \; x\in \mathbb R^{n}, \\
\partial_{t}v_{\eps}(0,x)&=v_{1}(x), \; x\in \mathbb R^{n}.
\end{split}
\right.
\end{equation}
Therefore, by Theorem \ref{TH: W06-1} the solution $v_{\eps}$ of the equation \eqref{EQ-AP:02} satisfies
\begin{equation*}\label{EQ: W06.01-2}
\|v_{\eps}\|_{L^{2}}\lesssim\left(b_{\eps}(t)\right)^{-1}(\|v_{0}\|_{H^{s}}+\|v_{1}\|_{H^{s-1}})
\end{equation*}
and
\begin{equation*}\label{EQ: W06.02-2}
\|(\partial_{t}, \nabla_{x})v_{\eps}\|_{L^{2}}\lesssim\left(b_{\eps}(t)\right)^{-1+\frac{1}{2}}(\|v_{0}\|_{H^{s+1}}+\|v_{1}\|_{H^{s}}).
\end{equation*}
Thus, we obtain
\begin{equation*}\label{EQ: W06.01-3}
\|\partial^{\alpha}u_{\eps}\|_{L^{2}}\lesssim\left(b_{\eps}(t)\right)^{-1}(\|\partial^{\alpha}u_{0}\|_{H^{s}}+\|\partial^{\alpha}u_{1}\|_{H^{s-1}})
\end{equation*}
and
\begin{equation*}\label{EQ: W06.02-3}
\|(\partial_{t}, \nabla_{x})\partial^{\alpha}u_{\eps}\|_{L^{2}}\lesssim\left(b_{\eps}(t)\right)^{-1+\frac{1}{2}}
(\|\partial^{\alpha}u_{0}\|_{H^{s+1}}+\|\partial^{\alpha}u_{1}\|_{H^{s}})
\end{equation*}
since the coefficients of the equation \eqref{EQ-AP:01} do not depend on $x$.
This proves Corollary \ref{COR: W06-1}.
\end{proof}

We now summarise the conclusions regarding the $L^{2}$-norms of the family $b_{\eps}$:

\begin{prop} \label{Lemma: Boundedness in eps}
Let $b$ be a positive distribution and let $\eps>0$.
Consider the following two cases:
\begin{itemize}
\item[(a)] $tb'_{\eps}(t)/b_{\eps}(t)\rightarrow\infty$ as $t\rightarrow\infty$;
\item[(b)] $\limsup_{t\to\infty} |tb'_{\eps}(t)/b_{\eps}(t)|<\infty$.
\end{itemize}
Then, respectively, for all $s>0$ we have
\begin{itemize}
\item[(a)] the solution of the Cauchy problem \eqref{EQ: CPbb} satisfies the estimate
\begin{equation*}\label{EQ: W07.01-2}
\|u_{\eps}\|_{L^{2}}\leq C(\|u_{0}\|_{H^{s}}+\|u_{1}\|_{H^{s-1}})
\end{equation*}
for some constant $C$ which is not depending on $b,\eps$ and initial data;
\item[(b)] the solution of the Cauchy problem \eqref{EQ: CPbb} has the decay
\begin{equation*}\label{EQ: W06.01-4}
\|u_{\eps}\|_{L^{2}}\leq C\left(b_{\eps}(t)\right)^{-1}(\|u_{0}\|_{H^{s}}+\|u_{1}\|_{H^{s-1}})
\end{equation*}
as $t\rightarrow\infty$, for some constant $C$ which is not depending on $b,\eps$ and initial data.
\end{itemize}
If $b$ is a positive, piecewise continuous and increasing function, then
the solution of the Cauchy problem \eqref{EQ: CPbb} is uniformly bounded in $\eps\in(0,1]$, i.e. for the solution $u_{\eps}$ the following estimate is true
\begin{equation}\label{EQ:ub}
\|u_{\eps}\|_{L^{2}}\leq C_{b, u_{0}, u_{1}},
\end{equation}
where the constant $C_{b, u_{0}, u_{1}}$ depends only on $b$ and initial data $u_{0}, u_{1}$.
\end{prop}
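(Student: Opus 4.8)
The plan is to read parts~(a) and~(b) straight off Theorem~\ref{TH: W07-1} and Theorem~\ref{TH: W06-1} by specialising to $\alpha=0$, and then to obtain the concluding bound \eqref{EQ:ub} from these together with two elementary properties of the regularisation $b_\eps=b\ast\psi_{\omega(\eps)}$ that I would record first. Since $\psi\ge0$ and $\int\psi=1$, convolution against $\psi_{\omega(\eps)}$ is an averaging, so the lower bound persists: $b_\eps(t)\ge b_0>0$ for every $t$ and every $\eps\in(0,1]$. Since $b$ is increasing, $b'$ is a nonnegative measure, so $b'_\eps=b'\ast\psi_{\omega(\eps)}\ge0$ is a genuine nonnegative smooth function and $b_\eps$ is increasing as well; in particular the integrals $\int_0^t b_\eps(\tau)/b'_\eps(\tau)\,d\tau$ appearing in \eqref{EQ: W07.01} are meaningful. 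Finally, Fourier-transforming \eqref{EQ: CPbb} in $x$ and pairing with $\overline{\partial_t\widehat{u_\eps}}$ gives, for each $\xi$, the identity $\partial_t\bigl(|\partial_t\widehat{u_\eps}(t,\xi)|^2+|\xi|^2|\widehat{u_\eps}(t,\xi)|^2\bigr)=-2\tfrac{b'_\eps(t)}{b_\eps(t)}|\partial_t\widehat{u_\eps}(t,\xi)|^2\le0$, so the basic energy is nonincreasing, an estimate whose constant (namely $1$) sees neither $b$ nor $\eps$; this will take care of high frequencies uniformly.

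With these in hand, part~(a) is Theorem~\ref{TH: W07-1} with $\alpha=0$: the prefactor $\bigl(1+\int_0^t b_\eps/b'_\eps\bigr)^{-|\alpha|/2}$ reduces to $1$ and leaves $\|u_\eps\|_{L^2}\lesssim\|u_0\|_{H^s}+\|u_1\|_{H^{s-1}}$; what one must still verify is that the implied constant is independent of $b$ and $\eps$, which for this no-loss $L^2$ bound can be done either by inspecting the constant in Theorem~\ref{TH: W07-1}, or by splitting $u_\eps$ into a high-frequency part controlled by the energy identity above (with a universal constant, for any $s>0$) and a low-frequency part controlled using the hypothesis $tb'_\eps(t)/b_\eps(t)\to\infty$. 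Part~(b) is, in the same spirit, literally estimate \eqref{EQ: W06.01} of Theorem~\ref{TH: W06-1}, again modulo the uniformity of the constant.

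For the concluding statement, let $b$ be positive, piecewise continuous and increasing, so that the $b_\eps$ are smooth, positive, increasing with $b'_\eps\ge0$. Fixing $\eps$, the net $u_\eps$ is covered by case~(a) or by case~(b) according to the growth of $b_\eps$ (and a combination of both if $tb'_\eps/b_\eps$ oscillates). In case~(a) one gets $\|u_\eps(t)\|_{L^2}\le C(\|u_0\|_{H^s}+\|u_1\|_{H^{s-1}})$ for all $t\ge0$, with $C$ independent of $\eps$. In case~(b), \eqref{EQ: W06.01} together with $b_\eps(t)\ge b_0$ gives $\|u_\eps(t)\|_{L^2}\le Cb_0^{-1}(\|u_0\|_{H^s}+\|u_1\|_{H^{s-1}})$ for all large $t$; on the remaining bounded interval $[0,T_0]$ one writes $u_\eps(t)=u_0+\int_0^t\partial_t u_\eps(\tau)\,d\tau$ and uses the uniform (in $\eps$ and $\tau$) energy bound on $\partial_t u_\eps$ coming from the energy identity to bound $\|u_\eps(t)\|_{L^2}$ by $\|u_0\|_{L^2}+T_0\sup_{\tau\in[0,T_0]}\|\partial_t u_\eps(\tau)\|_{L^2}$. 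The maximum of these bounds is the desired $C_{b,u_0,u_1}$ in \eqref{EQ:ub}.

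I expect the genuine obstacle to be exactly the claim that all these constants are independent of $\eps$ and of $b$: the Wirth estimates are stated with an anonymous $\lesssim$, and the sensitive point is the low-frequency regime, where $\widehat{u_\eps}(t,\xi)$ can be of size $|\xi|^{-1}|\widehat{u_1}(\xi)|$, so its contribution must be controlled by a constant that does not degenerate as $\eps\to0$. This is precisely where one exploits $b_\eps\ge b_0$ and the monotonicity of $b_\eps$, and where the asymptotic-in-$t$ form of \eqref{EQ: W06.01} has to be upgraded to an estimate valid for all $t\ge0$ via the transient-interval step; bookkeeping of what the final constant may depend on — only $b$, $u_0$, $u_1$, never $\eps$ — is the heart of the matter.
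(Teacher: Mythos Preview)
Your proposal is correct and follows essentially the same approach as the paper: part~(a) is Theorem~\ref{TH: W07-1} with $|\alpha|=0$, part~(b) is estimate~\eqref{EQ: W06.01} of Theorem~\ref{TH: W06-1}, and the uniform bound~\eqref{EQ:ub} is obtained by combining the two cases together with the observation that $b_\eps$ inherits positivity and monotonicity from $b$. The paper's own proof is in fact considerably terser than yours---it dispatches the concluding uniform bound in a single sentence---whereas you supply the energy identity for high frequencies and the transient-interval argument on $[0,T_0]$ to make the $\eps$-independence explicit; these additions are sound and address precisely the point you flag as the genuine obstacle, which the paper leaves implicit.
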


\begin{proof} ${\rm (a)}$ When $|\alpha|=0$ the inequality \eqref{EQ: W07.01} of Theorem \ref{TH: W07-1} implies the case ${\rm (a)}$.

${\rm (b)}$ This case follows from the inequality \eqref{EQ: W06.01} of Theorem \ref{TH: W06-1}.

If $b$ satisfies \eqref{EQ:as}, then we have \eqref{EQ:ub} using
cases ${\rm (a)}$ and ${\rm (b)}$ by taking into account the fact that $b_{\eps}(t)$ is also a positive piecewise continuous increasing function.
\end{proof}

\begin{lemma}
\label{LM: est-s in NonHomogeneous case}
Let us consider the nonhomogeneous equation
\begin{equation}\label{EQ-NonHomogeneous}
\left\{ \begin{split}
\partial_{t}^{2}u_{\eps}(t,x)-\Delta_{x} u_{\eps}(t,x)+\frac{b'_{\eps}(t)}{b_{\eps}(t)}\partial_{t}u_{\eps}(t,x)&=f_{\eps}(t,x), \; (t,x)\in [0, T]\times \mathbb R^{n},\\
u_{\eps}(0,x)&=u_{0}(x), \; x\in \mathbb R^{n}, \\
\partial_{t}u_{\eps}(0,x)&=u_{1}(x), \; x\in \mathbb R^{n}.
\end{split}
\right.
\end{equation}
Then, for any $t\in [0,T]$ and $s\in\mathbb R$ we have
\begin{equation}\label{EQ-NonHomogeneous-01}
\|\nabla u_{\eps}(t, \cdot)\|_{H^{s}}^2+\|\partial_{t}u_{\eps}(t, \cdot)\|_{H^{s}}^2\leq C_{1}(\|\nabla u_{0}\|^2_{H^{s}}+\|u_{1}\|_{H^{s}}^2)+C_{2}\int_{0}^{t}\|f_{\eps}(\tau, \cdot)\|_{H^{s}}^2d\tau.
\end{equation}
\end{lemma}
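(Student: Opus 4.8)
The plan is to run a standard energy estimate for the damped wave equation, exploiting the sign of the damping term. For fixed $\eps$ the coefficient $b_\eps$ is smooth and positive, and since $b_\eps$ is obtained by mollifying an increasing piecewise continuous function (or, in the general case, a positive distribution), the derivative $b'_\eps$ is a nonnegative function; hence the damping coefficient $a_\eps(t):=b'_\eps(t)/b_\eps(t)$ is nonnegative. This nonnegativity is exactly what makes the naive energy method work: the damping term can only \emph{decrease} the energy, so it can be discarded when proving an upper bound.

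First I would reduce to the $s=0$ case. Applying $\langle D_x\rangle^s$ (the Fourier multiplier with symbol $(1+|\xi|^2)^{s/2}$) to \eqref{EQ-NonHomogeneous} commutes with $\partial_t^2$, $\Delta_x$ and the $t$-dependent coefficient $a_\eps(t)$, turning the problem into the same equation for $\langle D_x\rangle^s u_\eps$ with right-hand side $\langle D_x\rangle^s f_\eps$ and data $\langle D_x\rangle^s u_0$, $\langle D_x\rangle^s u_1$; the $H^s$ norms become $L^2$ norms of the transformed quantities. So it suffices to prove \eqref{EQ-NonHomogeneous-01} with $s=0$. Next I would pass to the Fourier side in $x$, writing $\hat u_\eps(t,\xi)$, so that the PDE becomes the family of ODEs $\partial_t^2\hat u_\eps + |\xi|^2 \hat u_\eps + a_\eps(t)\,\partial_t\hat u_\eps = \hat f_\eps$. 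Define the natural energy density $E_\eps(t,\xi):=|\partial_t\hat u_\eps(t,\xi)|^2 + |\xi|^2|\hat u_\eps(t,\xi)|^2$. Differentiating in $t$ gives $\partial_t E_\eps = 2\,\mathrm{Re}\big(\overline{\partial_t\hat u_\eps}\,(\partial_t^2\hat u_\eps + |\xi|^2\hat u_\eps)\big) = 2\,\mathrm{Re}\big(\overline{\partial_t\hat u_\eps}\,(\hat f_\eps - a_\eps(t)\partial_t\hat u_\eps)\big) = -2a_\eps(t)|\partial_t\hat u_\eps|^2 + 2\,\mathrm{Re}\big(\overline{\partial_t\hat u_\eps}\,\hat f_\eps\big)$. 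Since $a_\eps(t)\ge 0$, the first term is $\le 0$ and may be dropped, and by Cauchy–Schwarz and Young's inequality $2\,\mathrm{Re}(\overline{\partial_t\hat u_\eps}\,\hat f_\eps)\le |\partial_t\hat u_\eps|^2 + |\hat f_\eps|^2 \le E_\eps + |\hat f_\eps|^2$. Hence $\partial_t E_\eps(t,\xi)\le E_\eps(t,\xi) + |\hat f_\eps(t,\xi)|^2$, and Grönwall's inequality yields
\[
E_\eps(t,\xi)\le e^{t}\Big(E_\eps(0,\xi) + \int_0^t |\hat f_\eps(\tau,\xi)|^2\,d\tau\Big).
\]
Integrating in $\xi$ (using Plancherel) gives $\|\partial_t u_\eps(t)\|_{L^2}^2 + \|\nabla u_\eps(t)\|_{L^2}^2 \le e^{T}\big(\|u_1\|_{L^2}^2 + \|\nabla u_0\|_{L^2}^2 + \int_0^t\|f_\eps(\tau)\|_{L^2}^2\,d\tau\big)$ for $t\in[0,T]$, which after restoring $\langle D_x\rangle^s$ is precisely \eqref{EQ-NonHomogeneous-01} with $C_1=C_2=e^{T}$.

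The only genuine point requiring care — and the one I would flag as the main obstacle — is justifying that $b'_\eps\ge 0$, i.e. that $a_\eps(t)=b'_\eps(t)/b_\eps(t)\ge 0$ so that the damping term really has the favourable sign and can be discarded. When $b$ is increasing and piecewise continuous this is clear: $b'$ is a nonnegative Radon measure, and convolution with the nonnegative mollifier $\psi_{\omega(\eps)}$ preserves nonnegativity, so $b'_\eps=b'\ast\psi_{\omega(\eps)}\ge 0$; likewise $b_\eps=b\ast\psi_{\omega(\eps)}\ge b_0>0$, so $a_\eps$ is well-defined, smooth and nonnegative. (In fact, even without the sign of $b'_\eps$ one still gets a bound, merely with the constants $C_1,C_2$ depending on $\eps$ through $\sup_{[0,T]}|a_\eps|$; but for the clean $\eps$-independent statement as written one uses $a_\eps\ge0$.) Everything else — the commutation with $\langle D_x\rangle^s$, Plancherel, the Grönwall step — is routine. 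Note also that the finite time horizon $[0,T]$ is essential here, as it is what keeps the factor $e^{T}$ finite; the sharper global-in-time decay is the content of the earlier results of Wirth quoted above, not of this lemma.
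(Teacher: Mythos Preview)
Your proof is correct and follows essentially the same strategy as the paper's: Fourier transform in $x$ reduces the problem to an ODE in $t$ parametrised by $\xi$, and an energy estimate finishes the job. The paper's write-up differs only in presentation: it rewrites the second-order ODE as the first-order system $\partial_t U_\eps = K_\eps(t)U_\eps + F_\eps$ with $U_\eps=(i|\xi|\hat u_\eps,\partial_t\hat u_\eps)^T$ and $K_\eps(t)=\begin{pmatrix}0 & i|\xi|\\ i|\xi| & -b'_\eps/b_\eps\end{pmatrix}$, observes that this matrix is symmetric, and then cites the standard energy estimate for symmetric systems from Taylor's book rather than carrying it out explicitly. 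Your direct scalar energy computation is the same estimate unpacked: since $K_\eps+K_\eps^*=\mathrm{diag}(0,-2b'_\eps/b_\eps)\le 0$, one has $\partial_t|U_\eps|^2\le 2\,\mathrm{Re}\langle F_\eps,U_\eps\rangle$, which is precisely your inequality $\partial_t E_\eps\le -2a_\eps|\partial_t\hat u_\eps|^2 + 2\,\mathrm{Re}(\overline{\partial_t\hat u_\eps}\,\hat f_\eps)$. Your version has the advantage of being fully self-contained and of making explicit the role of the sign condition $b'_\eps\ge 0$ in obtaining constants independent of $\eps$, which the paper's proof leaves implicit in the citation.
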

\begin{proof} We begin by applying the Fourier transform with respect to the spatial variables. This yields the Cauchy problem for the second ordinary differential equation
\begin{equation}\label{EQ-NonHomogeneous-03}
\left\{ \begin{split}
\partial_{t}^{2}\widehat{u_{\eps}}(t)+|\xi|^{2} \widehat{u}_{\eps}(t)+\frac{b'_{\eps}(t)}{b_{\eps}(t)}\partial_{t}\widehat{u}_{\eps}(t)&=\widehat{f}_{\eps}(t),\\
\widehat{u}_{\eps}(0)&=\widehat{u}_{0}, \\
\partial_{t}\widehat{u}_{\eps}(0)&=\widehat{u}_{1}.
\end{split}
\right.
\end{equation}

Now, instead of \eqref{EQ-NonHomogeneous-03} consider the following system
\begin{equation}\label{EQ-NonHomogeneous-04}
\partial_{t}U_{\eps}(t)=K_{\eps}(t)U_{\eps}(t)+F_{\eps}(t),
\end{equation}
with the Cauchy data
\begin{equation}\label{EQ-NonHomogeneous-04-Cauchy}
U_{\eps}(0)=U_{0},
\end{equation}
where
$$
U_{\eps}(t)=\left(
    \begin{array}{cc}
      i |\xi| \widehat{u_{\eps}}(t) \\
      \partial_{t}\widehat{u_{\eps}}(t) \\
           \end{array}
  \right), \,\,\,\,\,\,
U_{0} =\left(
    \begin{array}{cc}
      i |\xi| \widehat{u}_{0} \\
      \widehat{u}_{1} \\
           \end{array}
  \right),
$$
\begin{equation}\label{EQ-NonHomogeneous-04-Matrix}
K_{\eps}(t)=\left(
    \begin{array}{cc}
      0 & i |\xi|\\
      i |\xi| & - b'_{\eps}(t)/b_{\eps}(t) \\
           \end{array}
  \right),
\end{equation}
and
$$
F_{\eps}(t) =\left(
    \begin{array}{cc}
      0 \\
      \widehat{f}_{\eps}(t) \\
           \end{array}
  \right).
$$
The matrix \eqref{EQ-NonHomogeneous-04-Matrix} is symmetric. This allows us to use statements of e.g. Taylor's book \cite[Chapter IV]{T81} (also see \cite[Case 1]{GR15b} and \cite{GR12}). Hence, we obtain the statement of the lemma.
\end{proof}


Now let us formulate the main results of this paper. As described in the introduction it will be natural to make the assumptions \eqref{EQ:as} for our analysis, ensuring that $b'/b$ is positive.

\begin{thm}[Existence]
\label{TH: VWS-01}
Assume that the coefficient $b$ of the Cauchy problem \eqref{EQ-AP:01} is a positive, piecewise continuous and increasing function
such that $b\ge b_{0}$ for some constant $b_{0}>0$, and that $b'$ is a positive distribution such that $b'\ge b_{0}'$ for some constant $b_{0}'>0$.
Let $s>0$ and let the Cauchy data $(u_0, u_1)$ be in ${H}^{s}\times {H}^{s-1}$.
Then the Cauchy problem \eqref{EQ-AP:01} has a very weak solution of order $s$.
\end{thm}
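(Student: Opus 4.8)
The plan is to construct the very weak solution directly by regularising the coefficient and solving the family of classical Cauchy problems \eqref{EQ: CPbb}, then checking that the resulting net is $C^\infty([0,\infty);H^s)$-moderate. First I would fix a Friedrichs mollifier $\psi$ and set $b_\eps = b\ast\psi_{\omega(\eps)}$ with a concrete choice such as $\omega(\eps)=\eps$; this is legitimate since $b$, being piecewise continuous with $b\ge b_0$, is in particular a positive distribution with a compactly supportable (locally) regularisation, and the net $(b_\eps)_\eps$ is $C^\infty$-moderate by the structure theorems recalled around \eqref{EQ:incls}. The key point to extract at this stage is that $b_\eps$ inherits the essential features of $b$: since convolution with a nonnegative mollifier preserves positivity and monotonicity, $b_\eps$ is again positive (indeed $b_\eps\ge b_0$), increasing, and smooth, and moreover $b_\eps' = b'\ast\psi_{\omega(\eps)}\ge b_0' > 0$ because $b'$ is a positive distribution bounded below. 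This is exactly what makes the coefficient $b_\eps'(t)/b_\eps(t)$ of the regularised equation a well-defined, smooth, positive function, so that \eqref{EQ: CPbb} is a bona fide strictly dissipative wave equation with smooth time-dependent coefficient and hence has a unique solution $u_\eps\in C^\infty([0,\infty);H^s)$ for data in $H^s\times H^{s-1}$ by standard energy methods (e.g. via the symmetric-system reformulation used in the proof of Lemma \ref{LM: est-s in NonHomogeneous case}).

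Next I would establish the two moderateness bounds required by Definition \ref{def_vws}. For the coefficient, I need that $b_\eps'(t)/b_\eps(t)$ and its time-derivatives are polynomially bounded in $1/\eps$ uniformly on $[0,\infty)$; the numerator derivatives are controlled by $C^\infty$-moderateness of $(b_\eps')_\eps$, while the denominator is bounded below by $b_0$, so the quotient and all its $t$-derivatives satisfy $\sup_t|\partial_t^k(b_\eps'/b_\eps)(t)|\le c_k\eps^{-N-k}$ for suitable $N, c_k$. For the solution, I would apply Lemma \ref{LM: est-s in NonHomogeneous case} with $f_\eps\equiv 0$ (and its differentiated versions, exactly as in Corollary \ref{COR: W06-1}, applying $\partial_x^\alpha$ to the data and using $x$-independence of the coefficient) to get, for each $t$,
\begin{equation}\label{EQ:vws-energy}
\|\nabla u_\eps(t,\cdot)\|_{H^{s-1}}^2 + \|\partial_t u_\eps(t,\cdot)\|_{H^{s-1}}^2 \le C_1\bigl(\|\nabla u_0\|_{H^{s-1}}^2 + \|u_1\|_{H^{s-1}}^2\bigr),
\end{equation}
where I note that $C_1$ depends on $T$ only through the coefficient bounds; combined with the $L^2$ boundedness from Proposition \ref{Lemma: Boundedness in eps} (which gives $\|u_\eps\|_{L^2}\le C_{b,u_0,u_1}$ using that $b_\eps$ is positive, piecewise continuous and increasing), this controls $\|u_\eps(t,\cdot)\|_{H^s}$ uniformly in $t$ and $\eps$. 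To bound $\partial_t^k u_\eps$ in $H^{s-k}$ I would use the equation itself: $\partial_t^2 u_\eps = \Delta_x u_\eps - (b_\eps'/b_\eps)\partial_t u_\eps$, so higher time derivatives are expressed through $x$-derivatives of $u_\eps$, time derivatives of $u_\eps$ of lower order, and $t$-derivatives of the moderate coefficient $b_\eps'/b_\eps$; an induction on $k$ then yields $\|\partial_t^k u_\eps(t,\cdot)\|_{H^{s-k}}\le c_k\eps^{-N-k}$, which is precisely $C^\infty([0,\infty);H^s)$-moderateness.

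The main obstacle I anticipate is obtaining the estimates \emph{uniformly on the whole half-line} $[0,\infty)$ rather than just on finite intervals $[0,T]$, since Lemma \ref{LM: est-s in NonHomogeneous case} is stated on $[0,T]$ with a constant $C_1$ that could a priori grow with $T$. The resolution is to invoke Proposition \ref{Lemma: Boundedness in eps} and the underlying results of Wirth (Theorems \ref{TH: W07-1}, \ref{TH: W06-1} and Corollary \ref{COR: W06-1}): because $b_\eps$ is positive, increasing and piecewise continuous, for each fixed $\eps$ the quantity $tb_\eps'(t)/b_\eps(t)$ falls into case (a) or case (b) of Proposition \ref{Lemma: Boundedness in eps}, giving $L^2$-boundedness (and even decay) of $u_\eps$ and its derivatives \emph{globally in $t$}, with constants independent of $\eps$ for the $L^2$-norm. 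Splicing the finite-time energy estimate \eqref{EQ:vws-energy} for $t$ in a fixed compact interval together with the global decay/boundedness for large $t$ then yields the uniform-in-$t$, polynomially-in-$\eps$ bounds needed, completing the verification that $(u_\eps)_\eps$ is a very weak solution of order $s$.
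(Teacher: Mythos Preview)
Your proposal is correct and follows essentially the same route as the paper: regularise $b$ by mollification, verify that $b_\eps\ge b_0$ and $b_\eps'\ge b_0'$ so that the regularised coefficient $b_\eps'/b_\eps$ is smooth and positive, obtain a classical solution $u_\eps$ for each $\eps$, and then establish $C^\infty([0,\infty);H^s)$-moderateness by combining global-in-time estimates for $u_\eps$ and $\partial_t u_\eps$ with an induction on $k$ using the equation $\partial_t^2 u_\eps = \Delta_x u_\eps - (b_\eps'/b_\eps)\partial_t u_\eps$ together with the moderateness of the coefficient. The only cosmetic difference is that you first reach the base energy estimate via Lemma \ref{LM: est-s in NonHomogeneous case} (with $f_\eps\equiv 0$) and Proposition \ref{Lemma: Boundedness in eps}, whereas the paper goes directly to the Wirth-type results (Theorems \ref{TH: W07-1}, \ref{TH: W06-1}, Corollary \ref{COR: W06-1}); since you end up invoking exactly those results to handle uniformity on $[0,\infty)$, the two arguments coincide.
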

The uniqueness of very weak solutions will be formulated in Theorem \ref{TH: consistency-01}.

Now we formulate the theorem saying that very weak solutions recapture the classical solutions in the case the latter exist. This happens, for example, under conditions of Theorems \ref{TH: W07-1} and \ref{TH: W06-1}. So, we can compare the solution given by Theorems \ref{TH: W07-1} and \ref{TH: W06-1} with the very weak solution in Theorem \ref{TH: VWS-01} under assumptions when Theorems \ref{TH: W07-1} and \ref{TH: W06-1} hold.

\begin{thm}[Consistency]
\label{TH: consistency}
Assume that $b\in C^{1}([0, \infty))$ is an increasing function such that $b\ge b_0>0$, and that $b'$ is a positive function such that $b'\ge b_{0}'$ for some constant $b_{0}'>0$.
Let $s>0$, and consider the Cauchy problem
\begin{equation}\label{EQ: Consistency-01}
\left\{ \begin{split}
\partial_{t}^{2}u(t,x)-\Delta_{x}u(t,x)+\frac{b'(t)}{b(t)}\partial_{t}u(t,x)&=0, \; (t,x)\in [0, \infty)\times \mathbb R^{n},\\
u(0,x)&=u_{0}(x), \; x\in \mathbb R^{n}, \\
\partial_{t}u(0,x)&=u_{1}(x), \; x\in \mathbb R^{n},
\end{split}
\right.
\end{equation}
with $(u_0,u_1)\in {H}^{s}\times {H}^{s-1}$. Let $u$ be a very weak solution of
\eqref{EQ: Consistency-01}. Then for any regularising family $b_{\eps}$ in Definition \ref{def_vws}, the representatives $(u_\eps)_\eps$ of $u$ converge in the space $C([0, T]; {H}^{s})\cap
C^1([0, T]; {H}^{s-1})$ as $\eps\rightarrow0$
to the unique classical solution in $C([0, T]; {H}^{s}) \cap
C^1([0, T]; {H}^{s-1})$ of the Cauchy problem \eqref{EQ: Consistency-01}
given by Theorems \ref{TH: W07-1} and \ref{TH: W06-1}, for any $T>0$.
\end{thm}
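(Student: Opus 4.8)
## Proof Proposal

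The plan is to establish convergence by a standard energy argument applied to the difference between a very weak solution's representative and the classical solution, exploiting the fact that when $b \in C^1$ the regularisations $b_\eps$ converge to $b$ together with their derivatives on compact time intervals. First I would fix $T > 0$ and recall that since $b \in C^1([0,\infty))$ with $b \geq b_0 > 0$ and $b' \geq b_0' > 0$, the classical solution $u \in C([0,T];H^s) \cap C^1([0,T];H^{s-1})$ of \eqref{EQ: Consistency-01} exists and is unique; denote the dissipation coefficient $a(t) := b'(t)/b(t)$, which is continuous and bounded on $[0,T]$. Then I would take any $C^\infty$-moderate regularising family $b_\eps = b * \psi_{\omega(\eps)}$ as in Definition \ref{def_vws} and observe that, because $b \in C^1$, we have $b_\eps \to b$ in $C^1([0,T])$ as $\eps \to 0$; since moreover $b_\eps \geq b_0$ and $b_\eps' \geq b_0'$ uniformly (mollifying a nonnegative-shifted function preserves positivity in the limit, with uniform lower bounds on compacts after $\eps$ small), the coefficients $a_\eps(t) := b_\eps'(t)/b_\eps(t)$ are uniformly bounded on $[0,T]$ and converge uniformly to $a(t)$.

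Next I would set $w_\eps := u_\eps - u$, where $(u_\eps)_\eps$ is a representative of the very weak solution $u$. Subtracting the equation \eqref{EQ: Consistency-01} from \eqref{EQ: CPbb}, the difference solves a nonhomogeneous problem
\begin{equation}\label{EQ:diffeq}
\partial_t^2 w_\eps - \Delta_x w_\eps + a_\eps(t)\,\partial_t w_\eps = \bigl(a(t) - a_\eps(t)\bigr)\partial_t u =: f_\eps(t,x),
\end{equation}
with zero Cauchy data $w_\eps(0,\cdot) = 0$, $\partial_t w_\eps(0,\cdot) = 0$. The key point is that the source term is small: $\|f_\eps(t,\cdot)\|_{H^{s-1}} \leq \|a - a_\eps\|_{L^\infty[0,T]}\,\|\partial_t u(t,\cdot)\|_{H^{s-1}}$, and since $\partial_t u \in C([0,T];H^{s-1})$ this is bounded by $\|a - a_\eps\|_{L^\infty[0,T]}$ times a constant depending only on $u$ and $T$, hence tends to $0$ as $\eps \to 0$ uniformly in $t \in [0,T]$.

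Now I would invoke Lemma \ref{LM: est-s in NonHomogeneous case} applied to \eqref{EQ:diffeq} with Sobolev index $s-1$: since $a_\eps$ is uniformly bounded on $[0,T]$ (the constants $C_1, C_2$ in that lemma being controlled by this bound, which is uniform in $\eps$ once $\eps$ is small), and the Cauchy data vanish, we get
\begin{equation}\label{EQ:energyest}
\|\nabla w_\eps(t,\cdot)\|_{H^{s-1}}^2 + \|\partial_t w_\eps(t,\cdot)\|_{H^{s-1}}^2 \leq C_2 \int_0^t \|f_\eps(\tau,\cdot)\|_{H^{s-1}}^2\, d\tau \leq C_2 T \,\|a - a_\eps\|_{L^\infty[0,T]}^2 \,\sup_{\tau\in[0,T]}\|\partial_t u(\tau,\cdot)\|_{H^{s-1}}^2.
\end{equation}
The right-hand side tends to $0$ as $\eps \to 0$, uniformly in $t \in [0,T]$. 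This gives $\partial_t w_\eps \to 0$ in $C([0,T];H^{s-1})$ and $\nabla w_\eps \to 0$ in $C([0,T];H^{s-1})$; combined with $w_\eps(0,\cdot) = 0$ and integration in $t$, one upgrades this to $w_\eps \to 0$ in $C([0,T];H^s)$ (using that $\|w_\eps(t,\cdot)\|_{H^s}$ is controlled by $\|w_\eps(0,\cdot)\|_{H^s} + \int_0^t \|\partial_t w_\eps\|_{H^s}$ together with control of $\|\nabla w_\eps\|_{H^{s-1}} = \|w_\eps\|_{\dot H^s}$, so the full $H^s$ norm of $w_\eps$ is bounded by these quantities plus the low-frequency part handled by the $L^2$-type estimate). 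Hence $u_\eps \to u$ in $C([0,T];H^s) \cap C^1([0,T];H^{s-1})$.

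The main obstacle I anticipate is the bookkeeping of Sobolev indices: Lemma \ref{LM: est-s in NonHomogeneous case} is stated for $\|\nabla u_\eps\|_{H^s}$ and $\|\partial_t u_\eps\|_{H^s}$ with a source in $H^s$, so one must be careful to run it at level $s-1$ (since $\partial_t u$ only lives in $H^{s-1}$) and then separately recover convergence of $w_\eps$ itself in $H^s$ rather than just its gradient — i.e. controlling the zero-frequency component, which requires either a Poincaré-type observation on the Fourier side or directly using that $\widehat{w_\eps}(t,\xi)$ solves an ODE with vanishing data and a small source, giving an $L^2_\xi$ estimate weighted by $\langle\xi\rangle^{2s}$ directly. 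A secondary technical point is justifying the uniform-in-$\eps$ lower bounds $b_\eps \geq b_0$, $b_\eps' \geq b_0'$ and the $C^1[0,T]$ convergence $b_\eps \to b$; this is routine for $C^1$ data but should be stated, since it is precisely what makes the consistency argument work here and distinguishes this case from the general irregular one.
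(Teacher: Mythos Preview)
Your proposal is correct and follows essentially the same approach as the paper: write the difference between the classical solution and the regularised representative as the solution of the nonhomogeneous equation with coefficient $b'_\eps/b_\eps$, zero Cauchy data, and source $(b'_\eps/b_\eps - b'/b)\,\partial_t \widetilde{u}$, observe that this source tends to zero in $C([0,T];H^{s-1})$ because $b\in C^1$ gives $b'_\eps/b_\eps \to b'/b$ uniformly on $[0,T]$, and then apply Lemma~\ref{LM: est-s in NonHomogeneous case}. Your attention to the Sobolev-index bookkeeping (running the lemma at level $s-1$ and separately recovering the full $H^s$ norm of $w_\eps$ from $\|\nabla w_\eps\|_{H^{s-1}}$ and integration of $\partial_t w_\eps$) is in fact more careful than the paper, which simply asserts $\|\widetilde{u}-u_\eps\|_{H^s}\lesssim \|n_\eps\|_{C([0,T];H^{s-1})}$ without spelling out that step.
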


We note that the convergence in Theorem \ref{TH: consistency} can be realised also on the interval $[0,\infty)$ depending on further properties of $b(t)$ allowing a global application of Lemma
\ref{LM: est-s in NonHomogeneous case}, see Remark \ref{REM:global}.

\section{Proof of Theorem \ref{TH: VWS-01}}

We regularise $b$ by the convolution with a mollifier in $C^\infty_0(\mathbb R)$ and get nets of smooth functions as coefficients. More precisely, let $\psi\in C^\infty_0(\mathbb R)$, $\psi\ge 0$ with $\int\psi=1$.

Define
$$
\psi_{\eps}(t):=\frac{1}{\eps}\psi\left(\frac{t}{\eps}\right),
$$
and
$$
b_{\eps}(t):=(b\ast \psi_{\eps})(t), \qquad b_{\eps}'(t):=(b'\ast \psi_{\eps})(t),  \qquad t\geq 0.
$$
Since $b$ and $b'$ are positive distributions
and $\psi\in C^\infty_0(\mathbb R)$, $\textrm{supp}\,\psi\subset\textsc{K}$, $\psi\ge 0$,
then we have
\begin{align*}
b_{\eps}(t)&=(b\ast \psi_{\eps})(t)=\int\limits_{\mathbb R}b(t-\tau)\psi_{\eps}(\tau)d\tau=\int\limits_{\mathbb R}b(t-\eps\tau)\psi(\tau)d\tau \\
&=\int\limits_{\textsc{K}}b(t-\eps\tau)\psi(\tau)d\tau\geq b_{0} \int\limits_{\textsc{K}}\psi(\tau)d\tau=b_{0}>0,
\end{align*}
and
\begin{align*}
b_{\eps}'(t)&=(b'\ast \psi_{\eps})(t)=\int\limits_{\mathbb R}b'(t-\tau)\psi_{\eps}(\tau)d\tau=\int\limits_{\mathbb R}b'(t-\eps\tau)\psi(\tau)d\tau \\
&=\int\limits_{\textsc{K}}b'(t-\eps\tau)\psi(\tau)d\tau\geq b_{0}' \int\limits_{\textsc{K}}\psi(\tau)d\tau=b_{0}'>0.
\end{align*}

By the structure theorem for compactly supported distributions, we have that there exist $L\in\mathbb N_{0}$ and $c>0$ such that
\begin{equation}\label{EQ: a-q-C-moderate}
|\partial^{k}_{t}b_{\eps}(t)|\le c\,\eps^{-L-k},
\end{equation}
for all $k\in\mathbb N_{0}$ and $t\in[0,T]$. We note that the numbers $L$ may be related to the distributional orders of $b$.

Now, let us define $L$ more precisely under the assumption \eqref{EQ:as}. Indeed, when
$b$ is a piecewise continuous, positive and increasing function, we obtain
\begin{multline*}
|b_{\eps}(t)|=|(b\ast \psi_{\eps})(t)|=\left|\int\limits_{\mathbb R}b(t-\tau) \psi_{\eps}(\tau)d\tau\right|=\left|\int\limits_{\mathbb R}b(t-\eps\tau)\psi(\tau)d\tau\right| \\
=\left|\int\limits_{\textsc{K}}b(t-\eps\tau)\psi(\tau)d\tau\right|\leq c \int\limits_{\textsc{K}}\psi(\tau)d\tau=c,
\end{multline*}
and
\begin{align*}
|b_{\eps}'(t)|=|(b'\ast &\psi_{\eps})(t)|=\left|\int\limits_{\mathbb R}b'(t-\tau)\psi_{\eps}(\tau)d\tau\right|=\frac{1}{\eps}\left|\int\limits_{\mathbb R}b(t-\tau)\psi_{\eps}'(\tau)d\tau\right|\\
&=\frac{1}{\eps}\left|\int\limits_{\mathbb R}b(t-\eps\tau)\psi'(\tau)d\tau\right| =\frac{1}{\eps}\left|\int\limits_{\textsc{K}}b(t-\eps\tau)\psi'(\tau)d\tau\right|\leq c_1\eps^{-1},
\end{align*}
for all $t\in[0,T]$, where $c, c_1$ depend only on $T$.

Thus, for $b$, which is a piecewise continuous, positive and increasing function, $L=0$, and for the distributional function ($\delta$--like) $b'$, we have $L=1$.

Hence, $b_{\eps}, b_{\eps}'$ are $C^\infty$--moderate regularisations of the coefficients $b, b'$.
Now, fix $\eps\in(0,1]$, and consider the regularised problem
\begin{equation}\label{EQ: CPbb-2}
\left\{ \begin{split}
\partial_{t}^{2}u_{\eps}(t,x)-\Delta_{x}u_{\eps}(t,x)+\frac{b'_{\eps}(t)}{b_{\eps}(t)}\partial_{t}u_{\eps}(t,x)&=0, \; (t,x)\in [0, \infty)\times \mathbb R^{n},\\
u_{\eps}(0,x)&=u_{0}(x), \; x\in \mathbb R^{n}, \\
\partial_{t}u_{\eps}(0,x)&=u_{1}(x), \; x\in \mathbb R^{n},
\end{split}
\right.
\end{equation}
with the Cauchy data satisfy
$(u_0,u_1)\in {H}^{s}\times {H}^{s-1}$ and $b_{\eps}, b_{\eps}'\in C^\infty([0, \infty))$.
Then by Theorem \ref{TH: W07-1} the equation \eqref{EQ: CPbb-2} has a unique solution in the space $C^{0}([0, \infty);{H}^{s})\cap C^{1}([0, \infty);{H}^{s-1})$.
In fact, this unique solution is from $C^{k}([0, \infty);{H}^{s-k})$ for all $k$.

Now the proof is depending on the behavior of $b'_{\eps}(t)/b_{\eps}(t)$ at $t\rightarrow\infty$.
Let us consider several cases: when $t b'_{\eps}(t)/b_{\eps}(t)\rightarrow\infty$
as $t\rightarrow\infty$, and when $t b'_{\eps}(t)/b_{\eps}(t)=O(1)$
or $t b'_{\eps}(t)/b_{\eps}(t)=o(1)$ as $t\rightarrow\infty$.
Anyway applying Theorem \ref{TH: W07-1}, or Theorem \ref{TH: W06-1}, or Corollary \ref{COR: W06-1} to the equation \eqref{EQ: CPbb-2},
using the inequality \eqref{EQ: a-q-C-moderate} and that
$$
\frac{1}{|b_{\eps}(t)|}\leq\frac{1}{\tilde{b}_{0}},
$$
we get the estimate
\begin{equation}\label{ES: exp-1}
\| \partial_t u_{\eps}(t,\cdot)\|_{{H}^{|\alpha|}}^2\leq
C (\| u_0\|_{{H}^{s+|\alpha|+1}}^2+\|u_1\|_{{H}^{s+|\alpha|}}^2)
\end{equation}
uniformly in $\eps\in(0, 1]$.


The fact that there exist $N\in\mathbb N_{0}$, $c>0$ and, for all $k\in\mathbb N_{0}$ there exist
$c_k>0$ such that
$$
\|\partial_t^k u_\eps(t,\cdot)\|_{{H}^{s-k}}\le c_k \eps^{-N-k},
$$
for all $t\in[0, \infty)$, and $\eps\in(0,1]$, follows from the estimate
$$
\left|\partial_t^k\left(\frac{b'_{\eps}(t)}{b_{\eps}(t)}\right)\right|\le c_k \eps^{-1-k}
$$
which holds for all $t\in[0, \infty)$, $k\in\mathbb N_{0}$, and $\eps\in(0,1]$, from Theorems \ref{TH: W07-1} and \ref{TH: W06-1}, Corollary \ref{COR: W06-1}, and acting by the iterations of $\partial_{t}$ and by $\Delta_{x}$ on the
equality
$$
\partial_{t}^{2}u_{\eps}(t,x)+\frac{b'_{\eps}(t)}{b_{\eps}(t)}\partial_{t}u_{\eps}(t,x)=\Delta_{x}u_{\eps}(t,x),
$$
and taking it in $L^{2}$--norms. It means that $u_{\eps}$ is from the space
$C^{k}([0, \infty);{H}^{s-k})$ for any $k\in\mathbb N_{0}$, i.e. is
$C^\infty([0, \infty);{H}^{s})$-moderate.

This shows that the Cauchy problem \eqref{EQ-AP:01} has a very weak solution.

\section{Proof of Theorem \ref{TH: consistency}}
\label{SEC:pf1}

Here we prove the consistency of the very weak solution with the
classical one when the coefficients are
regular enough. But first, we show that the very weak solution is unique in an appropriate sense. To present uniqueness we will use the notions of Colombeau algebras adapted to the properties of classical solutions.

\begin{defi}
\label{DEF: negligible} We say that $(u_\eps)_\eps$ is
\emph{$C^\infty$-negligible} if for all $K\Subset\mathbb R$, for
all $\alpha\in\mathbb N$ and for all $\ell\in\mathbb N$ there exists
a constant $c>0$ such that
$$
\sup_{t\in K}|\partial^\alpha u_\eps(t)|\le c\eps^{\ell},
$$
for all $\eps\in(0,1]$.

Also, we call $(f_{\eps})_{\eps}$ is \emph{$C^{\infty}(\mathbb R_{+}; H^s)$--negligible} if for any $K\Subset\mathbb R$, for
all $k\in\mathbb N$ and for all $p\in\mathbb N$ there exists
a constant $c_{1}>0$ such that
$$
\sup_{t\in K}\|\partial^{k} f_\eps(t,\cdot)\|_{H^{s-k}}\le c_{1}\eps^{p},
$$
for arbitrary $\eps\in(0,1]$ and for any $k\in\mathbb N_0$.
\end{defi}

We now introduce the Colombeau algebra as the quotient
$$
\mathcal G(\mathbb R)=\frac{C^\infty-\text{moderate\,
nets}}{C^\infty-\text{negligible\, nets}}.
$$
For the general analysis of $\mathcal G(\mathbb R)$ we refer to
e.g. Oberguggenberger \cite{Oberguggenberger:Bk-1992}.


\begin{thm}[Uniqueness]
\label{TH: consistency-01}
Assume that $b$ is a piecewise continuous increasing function such that $b\ge b_{0}$ for some constant $b_{0}>0$, and that $b'$ is a positive distribution such that $b'\ge b_{0}'$ for some constant $b_{0}'>0$. Let $(u_0,u_1)\in {H}^{s} \times {H}^{s-1}$ for some $s\in\mathbb R_{+}$.
Then there exists an embedding of the coefficients $b, b'$ into $\mathcal G(\mathbb R_{+})$,
such that the Cauchy problem \eqref{EQ-AP:01}, that is
\begin{equation*}
\left\{ \begin{split}
\partial_{t}^{2}u(t,x)-\Delta_{x}u(t,x)+\frac{b'(t)}{b(t)}\partial_{t}u(t,x)&=0, \; (t,x)\in \mathbb R_{+}\times \mathbb R^{n},\\
u(0,x)&=u_{0}(x), \; x\in \mathbb R^{n}, \\
\partial_{t}u(0,x)&=u_{1}(x), \; x\in \mathbb R^{n},
\end{split}
\right.
\end{equation*}
has a unique solution $u\in
\mathcal G(\mathbb R_{+}; H^s)$.
\end{thm}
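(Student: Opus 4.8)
The plan is to obtain both existence and uniqueness from the regularisation machinery already set up, with Lemma~\ref{LM: est-s in NonHomogeneous case} supplying the only analytic input. First I would fix once and for all the Friedrichs mollifier $\psi$ and declare the embedding of the coefficients to be $\iota(b):=[(b\ast\psi_\eps)_\eps]=[(b_\eps)_\eps]$ and $\iota(b'):=[(b'\ast\psi_\eps)_\eps]=[(b'_\eps)_\eps]$; this is compatible with differentiation, and by the estimates preceding \eqref{EQ: a-q-C-moderate} the nets $(b_\eps)_\eps$, $(b'_\eps)_\eps$ are $C^\infty$-moderate, while $b_\eps\ge b_0>0$ makes $\iota(b)$ invertible in $\mathcal G(\mathbb R_+)$. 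Hence $\frac{b'_\eps}{b_\eps}$ defines an element of $\mathcal G(\mathbb R_+)$ with $|\partial_t^k(b'_\eps/b_\eps)|\le c_k\eps^{-1-k}$ on every $[0,T]$, and the embedded Cauchy problem makes sense in $\mathcal G(\mathbb R_+;H^s)$. Existence is then immediate: the very weak solution net $(u_\eps)_\eps$ produced in the proof of Theorem~\ref{TH: VWS-01} is $C^\infty([0,\infty);H^s)$-moderate and solves \eqref{EQ: CPbb} exactly, so its class is a solution in $\mathcal G(\mathbb R_+;H^s)$.

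For uniqueness, let $u,v\in\mathcal G(\mathbb R_+;H^s)$ both solve the embedded problem, and pick representatives $(u_\eps)_\eps$, $(v_\eps)_\eps$ taken with the fixed coefficient net $b_\eps$. By the definition of a solution in $\mathcal G$, the nets $n_\eps:=\partial_t^2u_\eps-\Delta u_\eps+\frac{b'_\eps}{b_\eps}\partial_tu_\eps$ and $\tilde n_\eps$ (defined the same way from $v_\eps$) are $C^\infty$-negligible, and $u_\eps(0,\cdot)-v_\eps(0,\cdot)$, $\partial_t u_\eps(0,\cdot)-\partial_t v_\eps(0,\cdot)$ are negligible. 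Setting $w_\eps:=u_\eps-v_\eps$ and subtracting the two regularised equations --- legitimate because both use the same coefficient net --- gives
\[
\partial_t^2w_\eps-\Delta w_\eps+\frac{b'_\eps}{b_\eps}\,\partial_t w_\eps=n_\eps-\tilde n_\eps=:g_\eps ,
\]
with $g_\eps$ $C^\infty$-negligible and with negligible Cauchy data.

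Next I would apply Lemma~\ref{LM: est-s in NonHomogeneous case} to $w_\eps$ on an arbitrary interval $[0,T]$ at the index $s$. Its constants may be taken independent of $\eps$: the Hermitian part of the matrix \eqref{EQ-NonHomogeneous-04-Matrix} equals $\mathrm{diag}(0,-b'_\eps/b_\eps)\le 0$ because $b'_\eps\ge 0$, so in the energy identity for \eqref{EQ-NonHomogeneous-04} the dissipative term only helps, and \eqref{EQ-NonHomogeneous-01} holds with, say, $C_1=2$ and $C_2=2T$. Since the data terms and $\int_0^t\|g_\eps(\tau,\cdot)\|_{H^s}^2\,d\tau$ are $\le C_T\,\eps^{2p}$ for every $p$, the quantities $\|\nabla w_\eps(t,\cdot)\|_{H^s}$ and $\|\partial_t w_\eps(t,\cdot)\|_{H^s}$ are $O(\eps^p)$ for all $p$, uniformly on $[0,T]$; integrating $\partial_t w_\eps$ from the negligible initial datum and combining with the gradient bound yields the same for $\|w_\eps(t,\cdot)\|_{H^s}$.

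It remains to upgrade this to $C^\infty([0,\infty);H^s)$-negligibility of $(w_\eps)_\eps$, which gives $u=v$ in $\mathcal G(\mathbb R_+;H^s)$. For this I would use the equation in the form $\partial_t^2w_\eps=\Delta w_\eps-\frac{b'_\eps}{b_\eps}\partial_t w_\eps+g_\eps$, differentiate repeatedly in $t$, estimate in $H^{s-k}$, and invoke the Leibniz rule together with the moderate bounds $|\partial_t^k(b'_\eps/b_\eps)|\le c_k\eps^{-1-k}$; each $t$-derivative costs at most a fixed negative power of $\eps$, which the ``$O(\eps^p)$ for all $p$'' decay of $w_\eps$ and $\partial_t w_\eps$ absorbs. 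The points needing care are this last induction and the verification that the notion of solution in $\mathcal G$ is independent of the chosen representatives and compatible with the embedded initial conditions (so that the differences above are genuinely negligible); the analytic core --- the $\eps$-uniform energy estimate --- is already packaged in Lemma~\ref{LM: est-s in NonHomogeneous case}. I expect this bookkeeping induction, rather than any new estimate, to be the main obstacle.
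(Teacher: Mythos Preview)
Your proof is correct and follows essentially the same strategy as the paper: subtract two candidate solutions, obtain a nonhomogeneous problem with negligible right-hand side and negligible Cauchy data, and then invoke Lemma~\ref{LM: est-s in NonHomogeneous case}. The only organisational difference is where the negligible forcing comes from. The paper allows the second solution $v$ to carry its own representative $(\widetilde b_\eps)_\eps$ of the embedded coefficient (in the same $\mathcal G$-class as $(b_\eps)_\eps$), so that the forcing is $f_\eps=\bigl(\tfrac{\widetilde{b'}_\eps}{\widetilde b_\eps}-\tfrac{b'_\eps}{b_\eps}\bigr)\partial_t v_\eps$; it then checks negligibility of this term via the algebraic identity for the difference of quotients together with the negligibility of $\widetilde b_\eps-b_\eps$ and $\widetilde{b'}_\eps-b'_\eps$. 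You instead fix a single coefficient representative for both solutions and let the residuals $n_\eps,\tilde n_\eps$ (coming from the $\mathcal G$-sense of the equation) supply the negligible right-hand side; this is the standard Colombeau formulation and is equivalent. Your explicit check that the constants in \eqref{EQ-NonHomogeneous-01} can be taken independent of $\eps$ (because the Hermitian part of $K_\eps$ in \eqref{EQ-NonHomogeneous-04-Matrix} is $\mathrm{diag}(0,-b'_\eps/b_\eps)\le 0$, so the dissipation only helps in the energy identity) and your induction on $t$-derivatives for full $C^\infty([0,\infty);H^s)$-negligibility are points the paper leaves implicit, so your argument is, if anything, more complete on those details.
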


Here $\mathcal G(\mathbb R_{+}; H^s)$ stands for the space of families which are
in $\mathcal G(\mathbb R_{+})$ with respect to $t$ and in $H^s$ with respect to $x$.

\begin{proof}
Let us show that by embedding the coefficient in the
corresponding Colombeau algebras the Cauchy problem has a unique
solution $u\in\mathcal G(\mathbb R_{+}; H^s)$.
The existence follows along the lines of the proof of Theorem \ref{TH: VWS-01}.
Assume now that the Cauchy problem has another
solution $v\in\mathcal G(\mathbb R_{+}; H^s)$. At the level of
representatives this means
\begin{equation*}
\left\{ \begin{split}
\partial_{t}^{2}(u_\eps-v_\eps)(t,x)-\Delta_{x}(u_\eps-v_\eps)(t,x)+\frac{b'_{\eps}(t)}{b_{\eps}(t)}\partial_{t}(u_\eps-v_\eps)(t,x)&=f_{\eps}(t,x), \\
(u_\eps-v_\eps)(0,x)&=0,  \\
(\partial_{t}u_\eps-\partial_{t}v_\eps)(0,x)&=0,
\end{split}
\right.
\end{equation*}
with
$$
f_{\eps}(t,x)=\left(\frac{\widetilde{b'}_{\eps}(t)}{\widetilde{b}_{\eps}(t)}-\frac{b'_{\eps}(t)}{b_{\eps}(t)}\right) \partial_{t}v_\eps(t,x),
$$
where $(\widetilde{b'}_\eps)_{\eps}$ and $(\widetilde{b}_\eps)_{\eps}$ are approximations corresponding to $v_\eps$. Indeed, $f_{\eps}$ is $C^{\infty}(\mathbb R_{+}; H^s)$--negligible since
\begin{align*}
\frac{\widetilde{b'}_{\eps}(t)}{\widetilde{b}_{\eps}(t)}-\frac{b'_{\eps}(t)}{b_{\eps}(t)}&=
\frac{\widetilde{b'}_{\eps}(t)b_{\eps}(t)-b'_{\eps}(t)\widetilde{b}_{\eps}(t)}{\widetilde{b}_{\eps}(t)b_{\eps}(t)}=\\
=&\frac{\widetilde{b'}_{\eps}(t)b_{\eps}(t)-b'_{\eps}(t)b_{\eps}(t)+b'_{\eps}(t)b_{\eps}(t)-b'_{\eps}(t)\widetilde{b}_{\eps}(t)}{\widetilde{b}_{\eps}(t)b_{\eps}(t)}=\\
=&\frac{(\widetilde{b'}_{\eps}(t)-b'_{\eps}(t))b_{\eps}(t)+(b_{\eps}(t)-\widetilde{b}_{\eps}(t))b'_{\eps}(t)}{\widetilde{b}_{\eps}(t)b_{\eps}(t)},
\end{align*}
and, the differences
$(\widetilde{b'}_{\eps}(t)-b'_{\eps}(t))_{\eps}$ and $(b_{\eps}(t)-\widetilde{b}_{\eps}(t))_{\eps}$ are $C^{\infty}(\mathbb R_{+})$--negligible being from the same equivalence class in the Colombeau algebra.

Hence, using Lemma \ref{LM: est-s in NonHomogeneous case} globally in $T$, we get the statement of the theorem
since the function $(f_{\eps})_{\eps}$ is $C^{\infty}(\mathbb R_{+}; H^s)$--negligible.
\end{proof}

\begin{proof}[Proof of Theorem \ref{TH: consistency}] Now we compare the classical solution $\widetilde{u}$ provided  by Theorems \ref{TH: W07-1} and \ref{TH: W06-1} with the very weak solution $u$ given by Theorem \ref{TH: consistency}.
The classical solution satisfies
\begin{equation}\label{EQ:Consistency-02}
\left\{ \begin{split}
\partial_{t}^{2}\widetilde{u}(t,x)-\Delta_{x}\widetilde{u}(t,x)+\frac{b'(t)}{b(t)}\partial_{t}\widetilde{u}(t,x)&=0, \; (t,x)\in \mathbb R_{+}\times \mathbb R^{n},\\
\widetilde{u}(0,x)&=u_{0}(x), \; x\in \mathbb R^{n}, \\
\partial_{t}\widetilde{u}(0,x)&=u_{1}(x), \; x\in \mathbb R^{n}.
\end{split}
\right.
\end{equation}
For the very weak solution $u$, there is a representative $(u_\eps)_\eps$ of $u$ such
that
\begin{equation}\label{EQ:Consistency-03}
\left\{ \begin{split}
\partial_{t}^{2}u_{\eps}(t,x)-\Delta_{x}u_{\eps}(t,x)+\frac{b'_{\eps}(t)}{b_{\eps}(t)}\partial_{t}u_{\eps}(t,x)&=0, \; (t,x)\in \mathbb R_{+}\times \mathbb R^{n},\\
u_{\eps}(0,x)&=u_{0}(x), \; x\in \mathbb R^{n}, \\
\partial_{t}u_{\eps}(0,x)&=u_{1}(x), \; x\in \mathbb R^{n},
\end{split}
\right.
\end{equation}
for a suitable embedding of the coefficient $b$. Indeed, for $b\in C^{1}([0,T])$ the
nets $(\frac{b'_{\eps}}{b_{\eps}}-\frac{b'}{b})_\eps$ are converging to $0$ in
$C([0,T])$. Let us rewrite
\eqref{EQ:Consistency-02} as
\begin{equation}\label{EQ:Consistency-04}
\left\{ \begin{split}
\partial_{t}^{2}\widetilde{u}(t,x)-\Delta_{x}\widetilde{u}(t,x)+\frac{b'_{\eps}(t)}{b_{\eps}(t)}\partial_{t}\widetilde{u}(t,x)&=n_{\eps}(t,x), \; (t,x)\in \mathbb R_{+}\times \mathbb R^{n},\\
\widetilde{u}(0,x)&=u_{0}(x), \; x\in \mathbb R^{n}, \\
\partial_{t}\widetilde{u}(0,x)&=u_{1}(x), \; x\in \mathbb R^{n},
\end{split}
\right.
\end{equation}
where
$$
n_{\eps}(t,x)=\left(\frac{b'_{\eps}(t)}{b_{\eps}(t)}-\frac{b'(t)}{b(t)}\right)\partial_{t}\widetilde{u}(t,x),
$$
and $n_\eps\in C([0,T]; H^{s-1})$. Also $n_\eps\rightarrow0$ as $\eps\to0$ in $C([0,T]; H^{s-1})$. From \eqref{EQ:Consistency-03} and \eqref{EQ:Consistency-04} we get that $(\widetilde{u}-u_\eps)$ solves the Cauchy problem

\begin{equation*}
\left\{ \begin{split}
\partial_{t}^{2}(\widetilde{u}-u_\eps)(t,x)-\Delta_{x}(\widetilde{u}-u_\eps)(t,x)+\frac{b'_{\eps}(t)}{b_{\eps}(t)}\partial_{t}(\widetilde{u}-u_\eps)(t,x)&=n_{\eps}(t,x),\\
(\widetilde{u}-u_\eps)(0,x)&=0, \\
(\partial_{t}\widetilde{u}-\partial_{t}u_\eps)(0,x)&=0.
\end{split}
\right.
\end{equation*}
Since by Lemma \ref{LM: est-s in NonHomogeneous case} we have
$$
\|(\widetilde{u}-u_\eps)\|_{H^{s}}\lesssim\|n_{\eps}\|_{C([0,T]; H^{s-1})},
$$
it follows that $u_\eps\to \widetilde{u}$
in $C([0,T]; {H}^{s}) \cap
C^1([0, T]; {H}^{s-1})$.
\end{proof}

\begin{rem}\label{REM:global}
The convergence can be made on the interval $[0,\infty)$ compared to the statement of Theorem \ref{TH: consistency}, depending on further properties of $b(t)$.
We note that in the proof of Theorem \ref{TH: consistency} we apply Lemma
\ref{LM: est-s in NonHomogeneous case} with
$$
f_{\eps}(t,x)=\left(\frac{\widetilde{b'}_{\eps}(t)}{\widetilde{b}_{\eps}(t)}-\frac{b'_{\eps}(t)}{b_{\eps}(t)}\right) \partial_{t}v_\eps(t,x).
$$
To ensure the convergence of the integral in \eqref{EQ-NonHomogeneous-01}, for example under conditions of Theorem \ref{TH: W06-1}, using
the estimate
$$\|\partial_{t}v_{\eps}\|_{L^{2}}\lesssim\left(b_{\eps}(t)\right)^{-1+\frac{1}{2}}(\|u_{0}\|_{H^{s+1}}+\|u_{1}\|_{H^{s}}),$$ we get that
$$
\int_0^\infty \|f_{\eps}(\tau, \cdot)\|_{L^{2}}^2d\tau \lesssim
\int_0^\infty \frac{|b'_{\eps}(\tau)|^2}{|b_{\eps}(\tau)|^3} d\tau\, (\|u_{0}\|_{H^{s+1}}^2+\|u_{1}\|_{H^{s}}^2)<\infty
$$
is finite under the conditions of Theorem \ref{TH: W06-1}.
Similarly, under conditions of Theorem \ref{TH: W07-1}, using that
$$\|\partial_{t}v_{\eps}\|_{L^{2}}\lesssim\frac{b_{\eps}(t)}{b'_{\eps}(t)}\left(1+\int\limits_{0}^{t} \frac{b_{\eps}(\tau)}{b'_{\eps}(\tau)}d\tau\right)^{-1}(\|u_{0}\|_{H^{s+1}}+\|u_{1}\|_{H^{s}}),$$
we get that
$$
\int_0^\infty \|f_{\eps}(\tau, \cdot)\|_{L^{2}}^2d\tau \lesssim
\int_0^\infty \left(1+\int\limits_{0}^{\tau} \frac{b_{\eps}(\sigma)}{b'_{\eps}(\sigma)}d\sigma\right)^{-2} d\tau \,(\|u_{0}\|_{H^{s+1}}^2+\|u_{1}\|_{H^{s}}^2)
$$
is finite depending on further properties of $b(t)$.
\end{rem}

\section{Distributional initial data case}
\label{SEC:distr}

In this section we briefly discuss the case when the Cauchy data are distributional. More precisely, we consider the following initial problem
\begin{equation}\label{EQ-AP:01-D}
\left\{ \begin{split}
\partial_{t}^{2}u(t,x)-\Delta_{x}u(t,x)+\frac{b'(t)}{b(t)}\partial_{t}u(t,x)&=0, \; (t,x)\in [0, \infty)\times \mathbb R^{n},\\
u(0,x)&=u_{0}(x), \; x\in \mathbb R^{n}, \\
\partial_{t}u(0,x)&=u_{1}(x), \; x\in \mathbb R^{n},
\end{split}
\right.
\end{equation}
as in \eqref{EQ-AP:01}, but here the Cauchy data $(u_{0}, u_{1})$ are allowed to be from ${H}^{s}\times{H}^{s-1}$ with a negative $s$. Indeed, the main argument in allowing $s$ to be also negative is the fact that the results of Theorems \ref{TH: W07-1} and \ref{TH: W06-1} could be extended as follows:

\vspace{3mm}

Assume that $s:=-\sigma$, where $\sigma>0$, i. e. $(u_{0}, u_{1})\in{H}^{-\sigma+1}\times{H}^{-\sigma}$. Then
\begin{itemize}
\item Firstly, we put
$$v:=\langle D_{x}\rangle^{-\sigma} u, \,\,\, v_{0}:=\langle D_{x}\rangle^{-\sigma} u_{0}, \,\,\, v_{1}:=\langle D_{x}\rangle^{-\sigma} u_{1},$$
where $\langle D_{x}\rangle^{-\sigma}$ is a Fourier multiplier with the symbol $(1+|\xi|^{2})^{-\frac{\sigma}{2}}$, ($\sigma>0$);
\item Secondly, we consider the Cauchy problem \eqref{EQ-AP:01-D} with $v$, that is
\begin{equation}\label{EQ-AP:01-D-2}
\left\{ \begin{split}
\partial_{t}^{2}v(t,x)-\Delta_{x}v(t,x)+\frac{b'(t)}{b(t)}\partial_{t}v(t,x)&=0, \; (t,x)\in [0, \infty)\times \mathbb R^{n},\\
v(0,x)&=v_{0}(x), \; x\in \mathbb R^{n}, \\
\partial_{t}v(0,x)&=v_{1}(x), \; x\in \mathbb R^{n}.
\end{split}
\right.
\end{equation}
\item Finally, for $\sigma>0$ we recall the Sobolev spaces
$${H}^{-\sigma}:=\{f: \,\, \langle D\rangle^{-\sigma} f\in L^{2}\}.$$
\end{itemize}
Since the coefficients of the equation \eqref{EQ-AP:01-D-2} do not depend on $x$ the statements of Theorems \ref{TH: W07-1} and \ref{TH: W06-1} hold for any $s$. Thus, arguing in the same way as in the previous sections, we get the following results on very weak solutions:

\begin{thm}[Existence]
\label{TH: VWS-01-D}
Assume that the coefficient $b$ of the Cauchy problem \eqref{EQ-AP:01-D} is a positive, piecewise continuous and increasing function
such that $b\ge b_{0}$ for some constant $b_{0}>0$, and that $b'$ is a positive distribution such that $b'\ge b_{0}'$ for some constant $b_{0}'>0$.
Let $s\in\mathbb R$ and let the Cauchy data $(u_0, u_1)$ be in ${H}^{s}\times {H}^{s-1}$.
Then the Cauchy problem \eqref{EQ-AP:01-D} has a very weak solution of order $s$.
\end{thm}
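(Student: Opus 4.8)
The plan is to reduce the case of distributional (negative-order) Cauchy data to the already-established positive-order Theorem \ref{TH: VWS-01}, exploiting that the dissipation coefficient $b'(t)/b(t)$ is independent of $x$, so that a Fourier multiplier in $x$ commutes with the entire equation. If $s>0$ the statement is exactly Theorem \ref{TH: VWS-01}, so it suffices to treat $s\le 0$.

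First I would regularise the coefficient exactly as in the proof of Theorem \ref{TH: VWS-01}: put $b_\eps=b\ast\psi_\eps$ and $b'_\eps=b'\ast\psi_\eps$ for a Friedrichs mollifier $\psi$. This construction does not refer to $s$, so the conclusions drawn there remain valid verbatim: $(b_\eps)_\eps$ and $(b'_\eps)_\eps$ are $C^\infty$-moderate (with $L=0$ and $L=1$ respectively), positive, bounded below by $b_0$ and $b_0'$, and $\left|\partial_t^k\!\left(\frac{b'_\eps(t)}{b_\eps(t)}\right)\right|\le c_k\eps^{-1-k}$ for all $t\ge 0$, $k\in\mathbb N_0$, $\eps\in(0,1]$.

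Next, given $(u_0,u_1)\in H^{s}\times H^{s-1}$ with $s\le 0$, I would choose $\sigma>0$ with $s+\sigma>0$ and set $v_0:=\langle D_x\rangle^{-\sigma}u_0\in H^{s+\sigma}$, $v_1:=\langle D_x\rangle^{-\sigma}u_1\in H^{s+\sigma-1}$, where $\langle D_x\rangle^{-\sigma}$ has symbol $(1+|\xi|^2)^{-\sigma/2}$. Since $\langle D_x\rangle^{-\sigma}$ commutes with $\partial_t$ and with $\Delta_x$, the net $u_\eps:=\langle D_x\rangle^{\sigma}v_\eps$ solves the regularised problem \eqref{EQ: CPbb} with data $(u_0,u_1)$ precisely when $v_\eps$ solves \eqref{EQ: CPbb} with data $(v_0,v_1)$; the latter is a problem of order $s+\sigma>0$, to which Theorems \ref{TH: W07-1} and \ref{TH: W06-1} and Corollary \ref{COR: W06-1} apply exactly as in the proof of Theorem \ref{TH: VWS-01}, producing for each $\eps$ a unique $v_\eps\in C^k([0,\infty);H^{s+\sigma-k})$ for every $k$, together with the corresponding energy bounds.

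Finally I would push these estimates back through the isometries $\langle D_x\rangle^{-\sigma}:H^{r}\to H^{r+\sigma}$: one has $\|\partial_t^k u_\eps(t,\cdot)\|_{H^{s-k}}=\|\partial_t^k v_\eps(t,\cdot)\|_{H^{s+\sigma-k}}$, and combining this with the moderateness of $b'_\eps/b_\eps$ and acting on the equation by iterated $\partial_t$ and by $\Delta_x$ (as at the end of the proof of Theorem \ref{TH: VWS-01}) yields $N\in\mathbb N_0$ and constants $c_k>0$ with $\|\partial_t^k u_\eps(t,\cdot)\|_{H^{s-k}}\le c_k\eps^{-N-k}$ for all $t\ge 0$ and $\eps\in(0,1]$; hence $(u_\eps)_\eps$ is $C^\infty([0,\infty);H^{s})$-moderate and solves \eqref{EQ: CPbb}, i.e. it is a very weak solution of order $s$ of \eqref{EQ-AP:01-D}. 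I do not anticipate a substantive obstacle: the two points to check — that $\langle D_x\rangle^{-\sigma}$ commutes exactly with the regularised equation (immediate, since the spatial multiplier and the $t$-dependent coefficient act in disjoint variables) and that a single regularisation $b_\eps$ serves all orders $s$ at once (immediate, since it is defined without reference to $s$) — are both routine; the only mildly delicate ingredient is the usual bookkeeping of selecting the moderateness exponent $N$ uniformly in $k$ and in $t\in[0,\infty)$, and this is inherited unchanged from the proof of Theorem \ref{TH: VWS-01}.
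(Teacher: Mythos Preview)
Your proposal is correct and follows essentially the same approach as the paper: the paper's argument in Section~\ref{SEC:distr} reduces the case of negative $s$ to the positive-order case by applying the Fourier multiplier $\langle D_x\rangle^{-\sigma}$ (which commutes with the equation because the coefficient $b'_\eps/b_\eps$ is independent of $x$), then invokes the proof of Theorem~\ref{TH: VWS-01} verbatim. Your write-up is in fact somewhat more detailed than the paper's, which simply states the substitution and says ``arguing in the same way as in the previous sections''.
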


\begin{thm}[Consistency]
\label{TH: consistency-D}
Assume that $b\in C^{1}([0, \infty))$ is an increasing function such that $b\ge b_0>0$, and that $b'$ is a positive distribution such that $b'\ge b_{0}'$ for some constant $b_{0}'>0$.
Let $s\in\mathbb R$, and consider the Cauchy problem \eqref{EQ-AP:01-D} with $(u_0,u_1)\in {H}^{s}\times {H}^{s-1}$. Let $u$ be a very weak solution of
\eqref{EQ-AP:01-D}. Then for any regularising family $b_{\eps}$ in Definition \ref{def_vws}, the representatives $(u_\eps)_\eps$ of $u$ converge in $C([0, T],{H}^{s})\cap
C^1([0, T]; {H}^{s-1})$ as $\eps\rightarrow0$
to the unique classical (Sobolev) solution in $C([0, T],{H}^{s}) \cap
C^1([0, T],{H}^{s-1})$ of the Cauchy problem \eqref{EQ-AP:01-D}
given by Theorems \ref{TH: W07-1} and \ref{TH: W06-1}, for any $T>0$.
\end{thm}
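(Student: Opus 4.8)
The plan is to reduce this statement to the already-established case $s>0$ (Theorem~\ref{TH: consistency}) by conjugating the equation with a spatial Fourier multiplier, exploiting that the coefficient $b'(t)/b(t)$, and each regularisation $b'_\eps(t)/b_\eps(t)$, depends only on $t$. Given $(u_0,u_1)\in H^{s}\times H^{s-1}$ with $s\in\mathbb R$ arbitrary, fix $\sigma>0$ with $s+\sigma>0$ and let $(u_\eps)_\eps$ be a representative of the very weak solution $u$ provided by Theorem~\ref{TH: VWS-01-D}. Set $v_\eps:=\langle D_x\rangle^{-\sigma}u_\eps$ and $(v_0,v_1):=(\langle D_x\rangle^{-\sigma}u_0,\langle D_x\rangle^{-\sigma}u_1)\in H^{s+\sigma}\times H^{s+\sigma-1}$. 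Since $\langle D_x\rangle^{-\sigma}$ commutes with $\partial_t$, with $\Delta_x$, and with multiplication by the $x$-independent factor $b'_\eps(t)/b_\eps(t)$, the net $(v_\eps)_\eps$ solves the regularised problem \eqref{EQ: CPbb} with data $(v_0,v_1)$; and as $\langle D_x\rangle^{-\sigma}: H^{\tau}\to H^{\tau+\sigma}$ is an isomorphism for every $\tau$, the net $(v_\eps)_\eps$ represents a very weak solution of order $s+\sigma>0$.

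Applying Theorem~\ref{TH: consistency} to the $v$-problem, for any regularising family $b_\eps$ the representatives $(v_\eps)_\eps$ converge in $C([0,T];H^{s+\sigma})\cap C^1([0,T];H^{s+\sigma-1})$, as $\eps\to0$, to the unique classical solution $\widetilde v$ of
\begin{equation*}
\partial_t^2\widetilde v-\Delta_x\widetilde v+\frac{b'(t)}{b(t)}\partial_t\widetilde v=0,\qquad \widetilde v(0,\cdot)=v_0,\quad \partial_t\widetilde v(0,\cdot)=v_1,
\end{equation*}
which exists by Theorems~\ref{TH: W07-1} and \ref{TH: W06-1} (applicable here since $s+\sigma>0$).

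Now put $\widetilde u:=\langle D_x\rangle^{\sigma}\widetilde v$. Since $\langle D_x\rangle^{\sigma}: H^{\tau+\sigma}\to H^{\tau}$ is an isomorphism commuting with $\partial_t$, $\Delta_x$ and with the coefficient, one checks that $\widetilde u\in C([0,T];H^{s})\cap C^1([0,T];H^{s-1})$ is the classical (Sobolev) solution of \eqref{EQ-AP:01-D} with data $(u_0,u_1)$, and applying $\langle D_x\rangle^{\sigma}$ to the convergence above gives $u_\eps=\langle D_x\rangle^{\sigma}v_\eps\to\widetilde u$ in $C([0,T];H^{s})\cap C^1([0,T];H^{s-1})$. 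This Sobolev solution is the \emph{unique} one in that space: the difference $w$ of two such solutions solves the homogeneous equation with zero Cauchy data, and Lemma~\ref{LM: est-s in NonHomogeneous case} (valid for all $s\in\mathbb R$, applied with $f\equiv0$ and $b_\eps$ replaced by $b$) forces $w\equiv0$ on $[0,T]$. The limit does not depend on the mollifying family: for $b\in C^1$ the nets $\bigl(b'_\eps/b_\eps-b'/b\bigr)_\eps$ converge to $0$ in $C([0,T])$, so the error-equation argument from the proof of Theorem~\ref{TH: consistency} applies verbatim at the level of $v$; nor does it depend on $\sigma$, since the classical solution of \eqref{EQ-AP:01-D} is unique. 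As $T>0$ was arbitrary, the claim follows.

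I expect no serious obstacle here; the only point needing care is to justify that conjugation by $\langle D_x\rangle^{\pm\sigma}$ is lawful, i.e.\ that it genuinely commutes with the full regularised operator, which is precisely where the hypothesis that $b$ depends only on $t$ is used. One should also verify that the classical solution $\widetilde v$ furnished by Theorems~\ref{TH: W07-1}--\ref{TH: W06-1} transports under $\langle D_x\rangle^{\sigma}$ to a genuine Sobolev solution of \eqref{EQ-AP:01-D} with the distributional data $(u_0,u_1)$, so that ``classical (Sobolev) solution'' is the right notion against which to compare the very weak solution; the uniqueness in the negative-order Sobolev space is then immediate from Lemma~\ref{LM: est-s in NonHomogeneous case}, which was deliberately proved for every $s\in\mathbb R$.
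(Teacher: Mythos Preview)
Your proposal is correct and follows essentially the same approach as the paper. In Section~\ref{SEC:distr} the paper does not give a separate proof of Theorem~\ref{TH: consistency-D}; it introduces the conjugation $v:=\langle D_x\rangle^{-\sigma}u$, observes that the $x$-independence of the coefficient makes Theorems~\ref{TH: W07-1} and~\ref{TH: W06-1} valid for all $s$, and then states that ``arguing in the same way as in the previous sections'' yields the result---which is precisely the reduction you carry out, only you package it as a direct appeal to the already-proved Theorem~\ref{TH: consistency} rather than re-running its proof.
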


\begin{thm}[Uniqueness]
\label{TH: consistency-01b}
Assume that $b$ is a piecewise continuous increasing function such that $b\ge b_{0}$ for some constant $b_{0}>0$, and that $b'$ is a positive distribution such that $b'\ge b_{0}'$ for some constant $b_{0}'>0$. Let $(u_0,u_1)\in {H}^{s} \times {H}^{s-1}$ for some $s\in\mathbb R$.
Then there exists an embedding of the coefficients $b, b'$ into $\mathcal G(\mathbb R_{+})$,
such that the Cauchy problem \eqref{EQ-AP:01-D} has a unique solution $u\in
\mathcal G(\mathbb R_{+}; H^s)$.
\end{thm}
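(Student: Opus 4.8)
The plan is to reproduce the argument of Theorem \ref{TH: consistency-01} essentially verbatim, the only point requiring comment being that nothing there --- except the construction of the solution itself --- was restricted to $s>0$, and that construction has already been carried out for arbitrary $s\in\mathbb R$ in Theorem \ref{TH: VWS-01-D} via the substitution $v:=\langle D_x\rangle^{-\sigma}u$ with $\sigma>0$ so large that $s+\sigma>0$; this is legitimate because the Fourier multiplier $\langle D_x\rangle^{-\sigma}$ commutes with the operator $\partial_t^2-\Delta_x+\frac{b'_\eps}{b_\eps}\partial_t$, whose coefficients do not depend on $x$.

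First I would fix the embedding by mollifying as in the proof of Theorem \ref{TH: VWS-01}: set $b_\eps:=b\ast\psi_\eps$ and $b'_\eps:=b'\ast\psi_\eps$ for a Friedrichs mollifier $\psi$, so that $(b_\eps)_\eps$ and $(b'_\eps)_\eps$ are $C^\infty$-moderate, positive, and bounded below by $b_0$ and $b_0'$ respectively (with moderateness orders $L=0$ and $L=1$). By Theorem \ref{TH: VWS-01-D}, for this embedding the regularised problem \eqref{EQ: CPbb} admits a $C^\infty(\mathbb R_+;H^s)$-moderate solution net $(u_\eps)_\eps$, whose class $u$ in $\mathcal G(\mathbb R_+;H^s)$ solves \eqref{EQ-AP:01-D}; this is the existence part.

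For uniqueness, suppose $u,v\in\mathcal G(\mathbb R_+;H^s)$ both solve \eqref{EQ-AP:01-D}, with representatives $(u_\eps)_\eps$, $(v_\eps)_\eps$ arising from embeddings $(b_\eps,b'_\eps)$ and $(\widetilde b_\eps,\widetilde{b'}_\eps)$ of $(b,b')$. Exactly as in the proof of Theorem \ref{TH: consistency-01}, $w_\eps:=u_\eps-v_\eps$ solves the nonhomogeneous problem \eqref{EQ-NonHomogeneous} with vanishing Cauchy data and right-hand side
\[
f_\eps(t,x)=\left(\frac{\widetilde{b'}_\eps(t)}{\widetilde b_\eps(t)}-\frac{b'_\eps(t)}{b_\eps(t)}\right)\partial_t v_\eps(t,x).
\]
From the identity
\[
\frac{\widetilde{b'}_\eps}{\widetilde b_\eps}-\frac{b'_\eps}{b_\eps}=\frac{(\widetilde{b'}_\eps-b'_\eps)\,b_\eps+(b_\eps-\widetilde b_\eps)\,b'_\eps}{\widetilde b_\eps\,b_\eps},
\]
together with the facts that $(\widetilde{b'}_\eps-b'_\eps)_\eps$ and $(b_\eps-\widetilde b_\eps)_\eps$ are $C^\infty(\mathbb R_+)$-negligible (being representatives of the same class in $\mathcal G(\mathbb R_+)$), that $b_\eps,b'_\eps$ and the reciprocal $1/(\widetilde b_\eps b_\eps)$ are $C^\infty$-moderate (the latter bounded since $\widetilde b_\eps b_\eps\ge b_0^2$), and that $(\partial_t v_\eps)_\eps$ is moderate, the usual product and quotient rules in $\mathcal G$ show that $(f_\eps)_\eps$ is $C^\infty(\mathbb R_+;H^s)$-negligible. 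Lemma \ref{LM: est-s in NonHomogeneous case}, which holds for every $s\in\mathbb R$, then yields, on any interval $[0,T]$,
\[
\|\nabla w_\eps(t,\cdot)\|_{H^s}^2+\|\partial_t w_\eps(t,\cdot)\|_{H^s}^2\le C_2\int_0^t\|f_\eps(\tau,\cdot)\|_{H^s}^2\,d\tau=O(\eps^p)\quad\text{for all }p,
\]
uniformly for $t\in[0,T]$; integrating $\partial_t w_\eps$ from $0$ (where $w_\eps=0$) also controls $\|w_\eps(t,\cdot)\|_{H^s}$, and differentiating $\partial_t^2 w_\eps=\Delta_x w_\eps-\tfrac{b'_\eps}{b_\eps}\partial_t w_\eps+f_\eps$ repeatedly in $t$ propagates negligibility to all $t$-derivatives. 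Hence $(w_\eps)_\eps$ is $C^\infty(\mathbb R_+;H^s)$-negligible, i.e. $u=v$ in $\mathcal G(\mathbb R_+;H^s)$.

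I expect the main difficulty to be purely bookkeeping: checking that the negative-order reduction $\langle D_x\rangle^{-\sigma}$ used for existence is compatible with passage to the Colombeau quotient (it is, being an isomorphism $H^{s+\sigma}\to H^s$ commuting with the $x$-independent operator), and that division by $\widetilde b_\eps b_\eps$ preserves negligibility (it does, since this net is moderate and bounded below away from $0$). The remaining steps merely transcribe the proofs of Theorems \ref{TH: VWS-01-D} and \ref{TH: consistency-01}.
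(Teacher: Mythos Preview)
Your proposal is correct and follows essentially the same approach as the paper: the paper does not give a separate proof of this theorem but simply remarks that, after the Fourier-multiplier reduction $v=\langle D_x\rangle^{-\sigma}u$ (which commutes with the $x$-independent operator and extends Theorems \ref{TH: W07-1} and \ref{TH: W06-1} to all $s\in\mathbb R$), one argues ``in the same way as in the previous sections,'' i.e.\ repeats verbatim the proof of Theorem \ref{TH: consistency-01}. Your write-up in fact supplies a few bookkeeping details (integrating $\partial_t w_\eps$ to control $\|w_\eps\|_{H^s}$, iterating the equation for higher $t$-derivatives) that the paper's proof of Theorem \ref{TH: consistency-01} leaves implicit.
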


We note that certain further generalisations of the obtained results are possible replacing the Laplacian with more general operators with a control on its spectral behaviour, using methods developed in \cite{RT16a, RT17}, or to dissipative wave equations on manifolds (\cite{DRT16}). Such questions will be addressed elsewhere.

\section{Numerical experiments}\label{Numerics}

To illustrate the theoretical results presented above, in this section we compute a sequence of solutions $(u_\epsilon)_{\epsilon>0}$ of the regularised problem \eqref{EQ: CPbb}, obtained for the case of a regularisation of the synthetic piecewise continuous increasing function
\[
b(t)=\begin{cases}
          & 1,  ~~~~~~~~~ t<5, \\
          & \frac{1}{10} t + \frac{3}{2}, ~~~t\geq 5.
         \end{cases}
\]
We consider the regularisation
$b_\epsilon(t)= (b \ast \psi_\epsilon)(t)$, $t\geq 0$, of the coefficient $b(t)$ by the convolution with the mollifier
\begin{equation}\label{mollifier1}
\psi_\epsilon(t) = \frac{1}{\epsilon} \psi \Big( \frac{t}{\epsilon} \Big),
\end{equation}
where
\[
\psi(t)= \begin{cases}
               &  \frac{1}{C} \exp \Big(  \frac{1}{t^2 -1}   \Big), ~~~|t|\leq 1, \\
               & 0, ~~~|t| >1.
              \end{cases}
\]
Here $C = 0.443994$ so that $\int \psi =1$.  We take the initial conditions for the regularised problem \eqref{EQ: CPbb} to be
\[
u_\epsilon(0,x)= \exp\Big( -\frac{(x-a)^2}{\delta} \Big), ~~~~~\partial_t u_\epsilon (0,x)= -\frac{1}{b_\epsilon (0)} \partial_x \Big( \exp \Big(-\frac{(x-a)^2}{\delta} \Big) \Big),
\]
where $\delta=0.3$ and $a=0$. In particular, in Figure \ref{Coefficientb}, we illustrate the regularisation $b_\epsilon(t)$ of the coefficient $b(t)$ for $\epsilon =0.5$.

\begin{figure}[ht!]
\centering
\includegraphics[width=13cm]{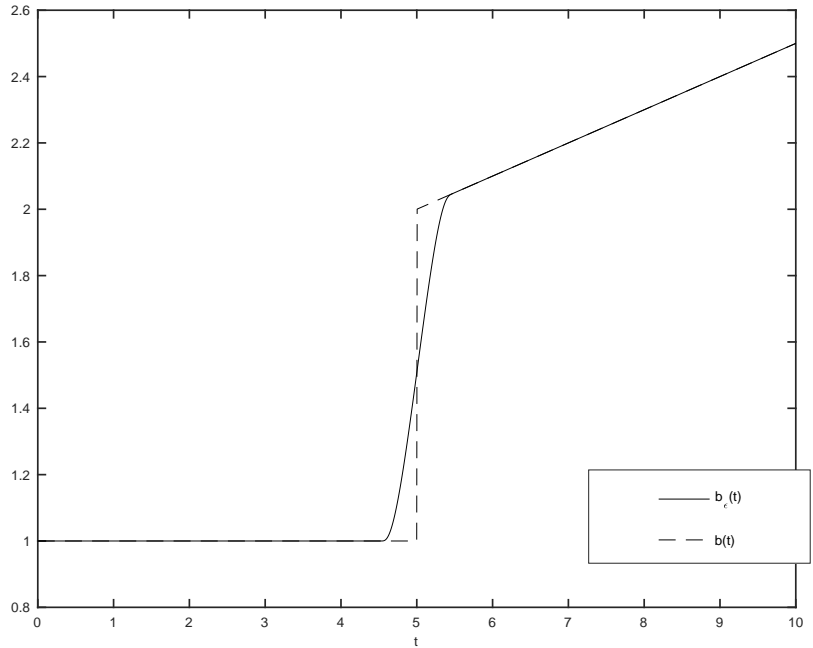}
\caption{In this plot, we display the function $b(t)$ together with its regularisation $b_\epsilon(t)$ obtained by convolution with the mollifier $\psi_\epsilon(t)$ defined in \eqref{mollifier1}. The regularised coefficient $b_\epsilon$ was computed numerically with Matlab R2016b. }
\label{Coefficientb}
\end{figure}
To approximate the solutions of the regularised problem \eqref{EQ: CPbb}, we solve the equivalent system
\begin{equation}\label{PUequation}
\left\{ \begin{split}
& p_t = -b_\epsilon (t) u_x, \\
& u_t = - \frac{1}{b_\epsilon(t)} p_x,
\end{split} \right.
\end{equation}
subject to the initial conditions
$$p(0,x)=u(0,x)=\exp\Big( -\frac{(x-a)^2}{\delta} \Big).$$
Observe that if $(p,u)$ is a solution of problem \eqref{PUequation}, then
$u_\epsilon=u$ is a solution of problem \eqref{EQ: CPbb}.

To solve numerically the initial value problem \eqref{PUequation}, we approximate the first derivatives $p_x, u_x$ with respect to the variable $x$ with a fourth-order finite difference scheme, and the classical fourth-order Runge-Kutta method is used for time stepping. The results for $u_\epsilon(t,x)$ at time $t=60$ are presented in Figure \ref{CompUeps}, for several values of the parameter $\epsilon$. The numerical parameters used in these computer simulations were $\Delta x=0.0171$, $\Delta t=0.0067$, and the spatial computational domain was the interval $[-50,70]$. In all computer simulations, we used Matlab R2016b.

In Figure \ref{propagationU2}, we display the solution $u_\epsilon(t,x)$ for $\epsilon=0.01$ at different values of time $t= 4.8,  5.0, 5.2, 5.4, 5.6, 5.8, 6.0, 6.2 , 6.4, 6.6 , 6.8, 7.0, 7.2$ before and after the interaction with the discontinuity at $t=5$. Observe that the profile starts splitting into two approximately at $t=5.6$, generating a second traveling wave with smaller size moving to the left.

In Figure \ref{AmplitudeUeps05} we illustrate the decay with respect to time $t$ of the solution $u_\epsilon$ of the regularised problem \eqref{EQ: CPbb} for some values of the parameter $\epsilon$.

Finally, in order to illustrate the evolution of a near-singular initial pulse in problem \eqref{EQ: CPbb}, in Figures \ref{propagationU3}, \ref{propagationU4}, \ref{propagationU5},
we display the solution $u_\epsilon$ of the regularised
problem \eqref{EQ: CPbb} at time $t=8$,  with $\epsilon=0.01$ and initial data in the form
\begin{equation}\label{Cauchydata}
u(0,x)=f(x), ~~~u_t(0,x)=-\frac{1}{b_\epsilon(0)} f'(x),
\end{equation}
where
\[
f(x)= \frac{1}{\pi} \Big( \frac{e}{x^2 + e^2} \Big),
\]
with $e=0.05$, $e=0.03$ and $e=0.01$, respectively. The coefficient $b(t)$ and its regularisation $b_\epsilon(t)$ were the same as in the previous experiments.  The numerical parameters were $\Delta t=0.0011$, $\Delta x=0.025$ for the experiment with $e=0.05$,  and $\Delta t=8E-4$, $\Delta x=0.025$, for $e=0.03$ and $\Delta t= 2.28E-4$, $\Delta x=0.008$ for the case of the experiment with $e=0.01$. In all experiments the spatial computational domain was the interval $[-20,20]$. Thus, we are considering a sequence of incoming pulses that approaches to the delta function. From these pictures, we can see that the left-going traveling wave (already observed in the experiments in Figure \ref{propagationU2}) does not appear to be regularised, and instead resembles a delta function but just smaller in amplitude. We point out that this phenomenon is different from conical refraction when the splitting singularity is more regular than the original one.

\begin{figure}[ht!]
\centering
\includegraphics[width=15cm,height=10cm]{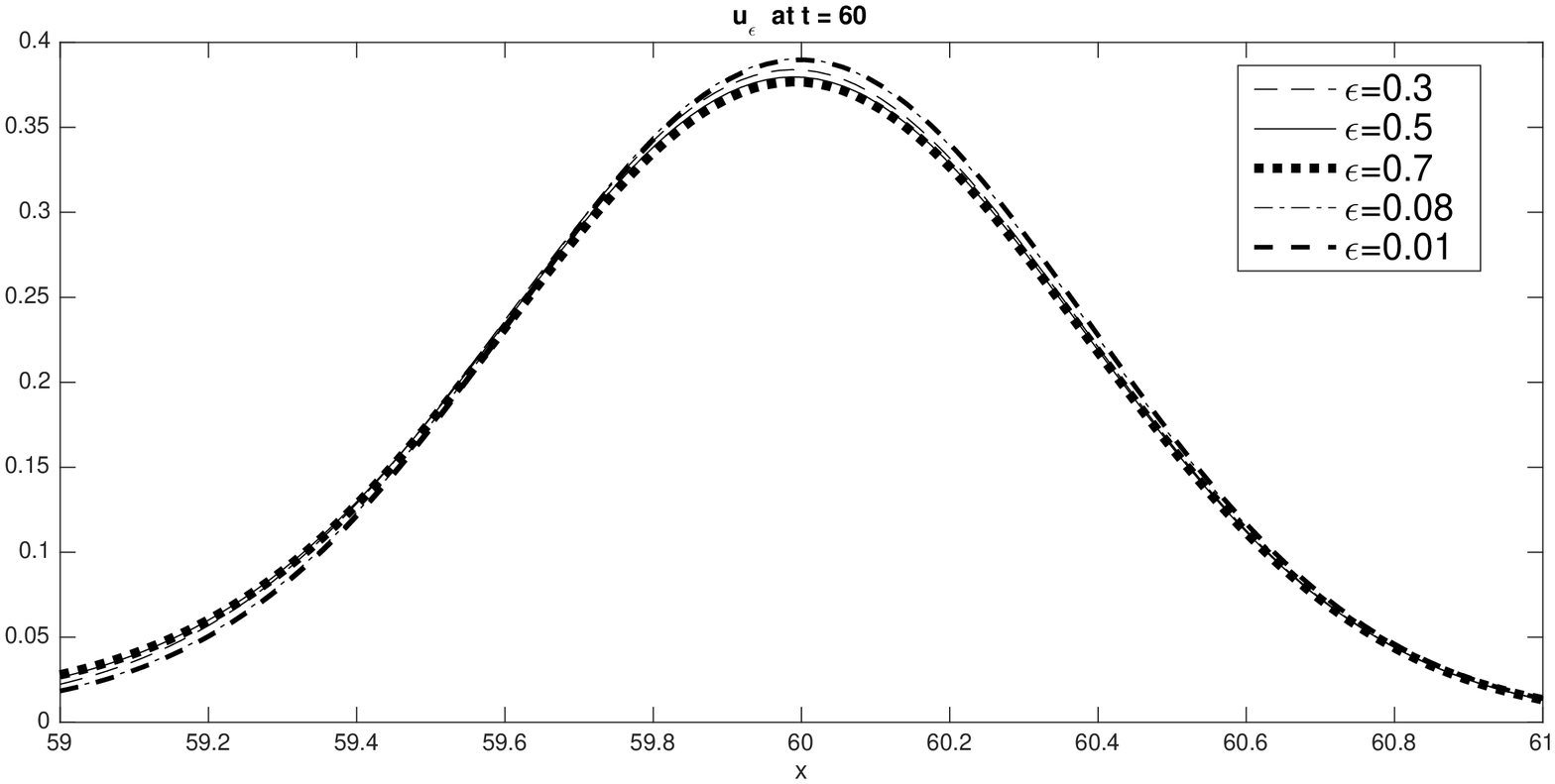}
\caption{Comparison of the solution $u_\epsilon$ at time $t=60$ of the regularised problem \eqref{EQ: CPbb}, for several values of the parameter $\epsilon$.   }
\label{CompUeps}
\end{figure}

\begin{figure}[ht!]
\centering
\includegraphics[width=15cm,height=8cm]{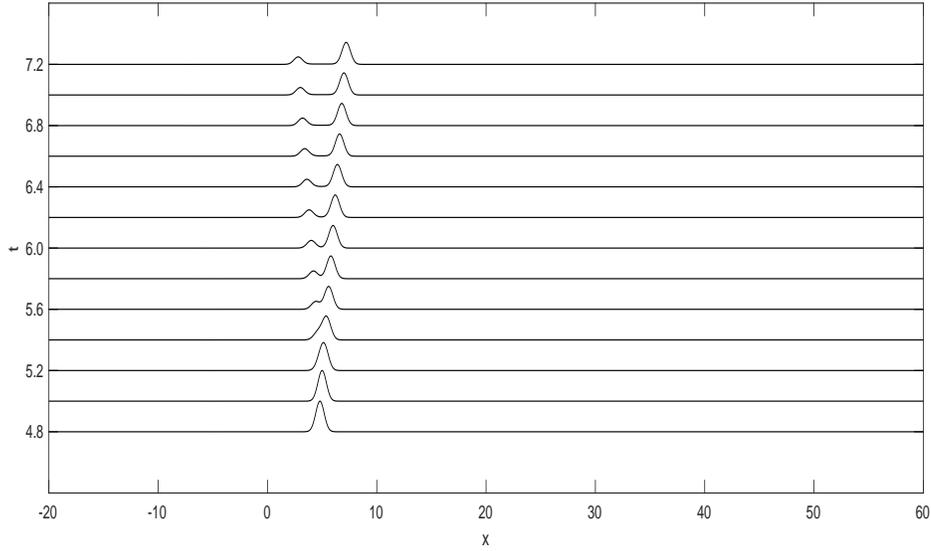}
\caption{In this plot, we can see the evolution of $u_\epsilon$ for $\epsilon=0.01$ at  times $t= 4.8,  5.0, 5.2, 5.4, 5.6, 5.8, 6.0, 6.2 , 6.4, 6.6 , 6.8, 7.0, 7.2$.  }
\label{propagationU2}
\end{figure}

\begin{figure}[ht!]
\centering
\includegraphics[width=15cm,height=8cm]{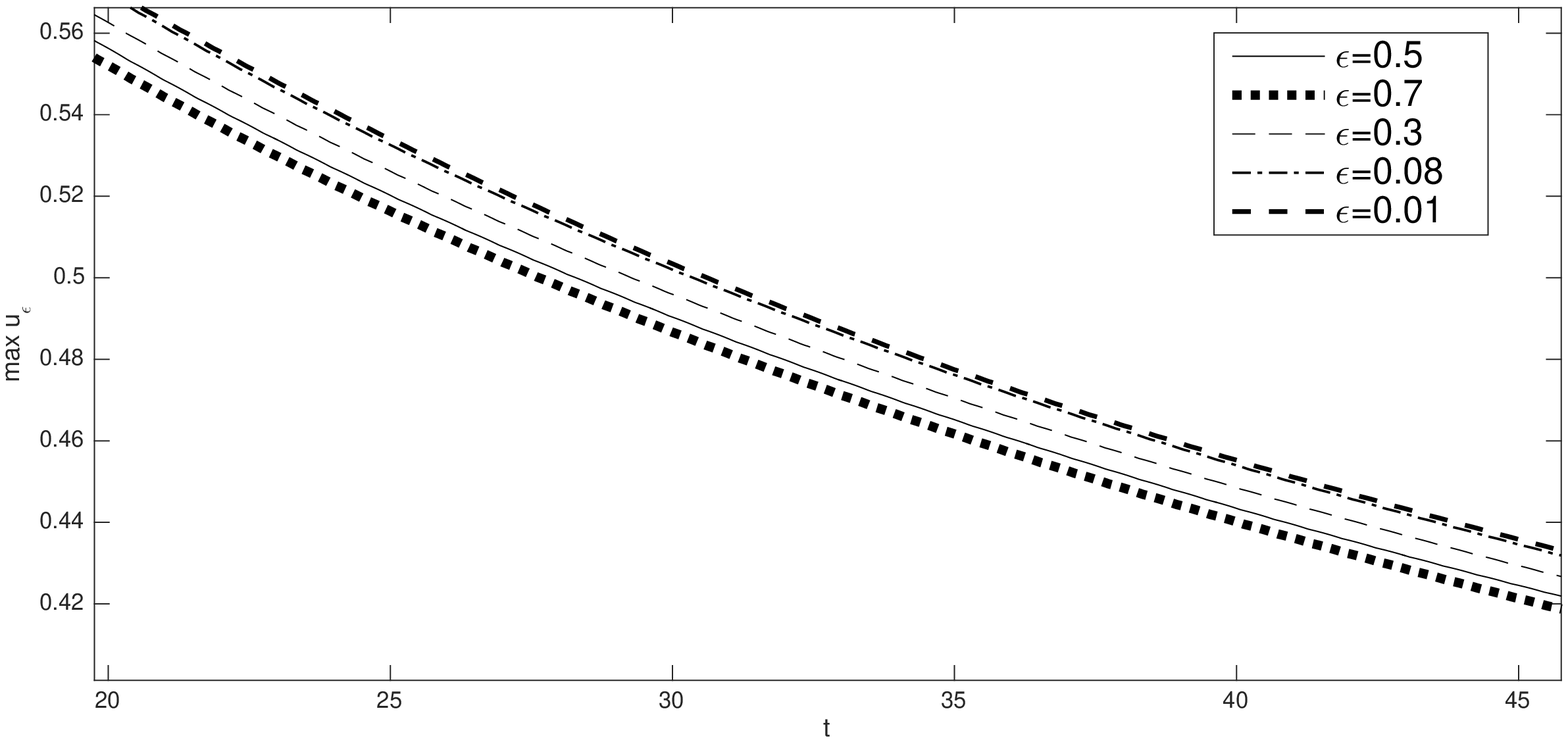}
\caption{In this plot, we can see the decay of the solution $u_\epsilon$ with respect to the time $t$ of the regularised problem \eqref{EQ: CPbb}, for several values of the parameter $\epsilon$.   }
\label{AmplitudeUeps05}
\end{figure}

\begin{figure}[ht!]
\centering
\includegraphics[width=15cm,height=8cm]{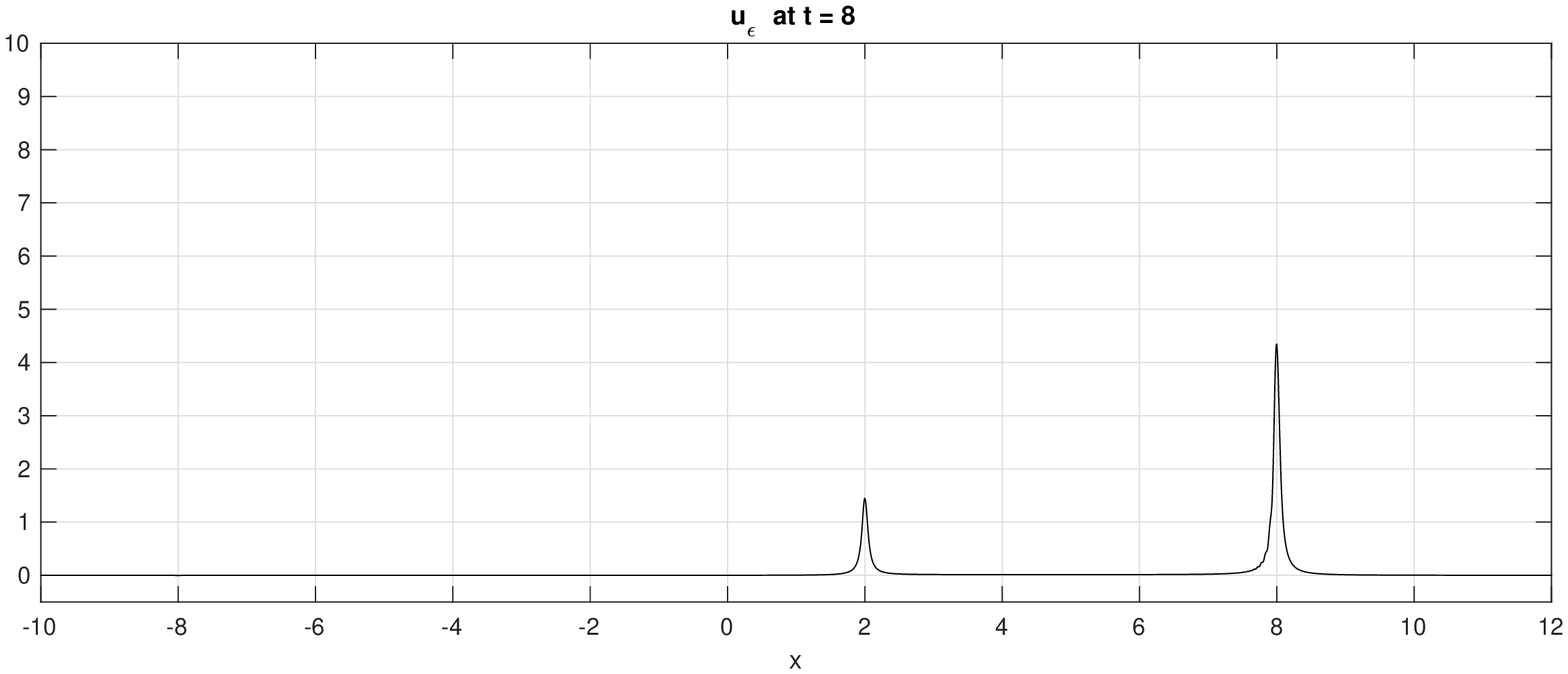}
\caption{In this plot we see the solution $u_\epsilon$ of the regularised problem \eqref{EQ: CPbb} for $\epsilon=0.01$ and initial data \eqref{Cauchydata} with $e=0.05$. }
\label{propagationU3}
\end{figure}

\begin{figure}[ht!]
\centering
\includegraphics[width=15cm,height=8cm]{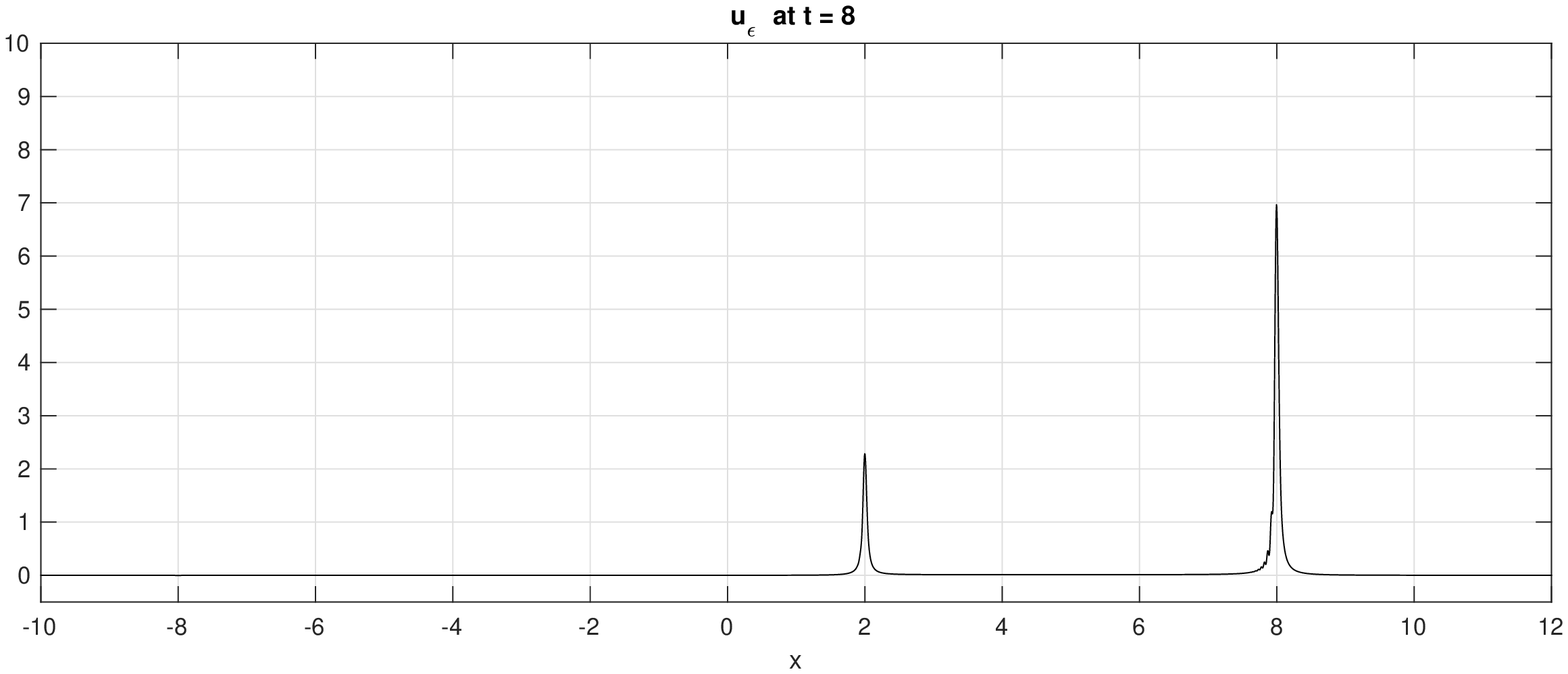}
\caption{In this plot we see the solution $u_\epsilon$ of the regularised problem \eqref{EQ: CPbb} for $\epsilon=0.01$ and initial data \eqref{Cauchydata} with $e=0.03$. }
\label{propagationU4}
\end{figure}

\begin{figure}[ht!]
\centering
\includegraphics[width=15cm,height=8cm]{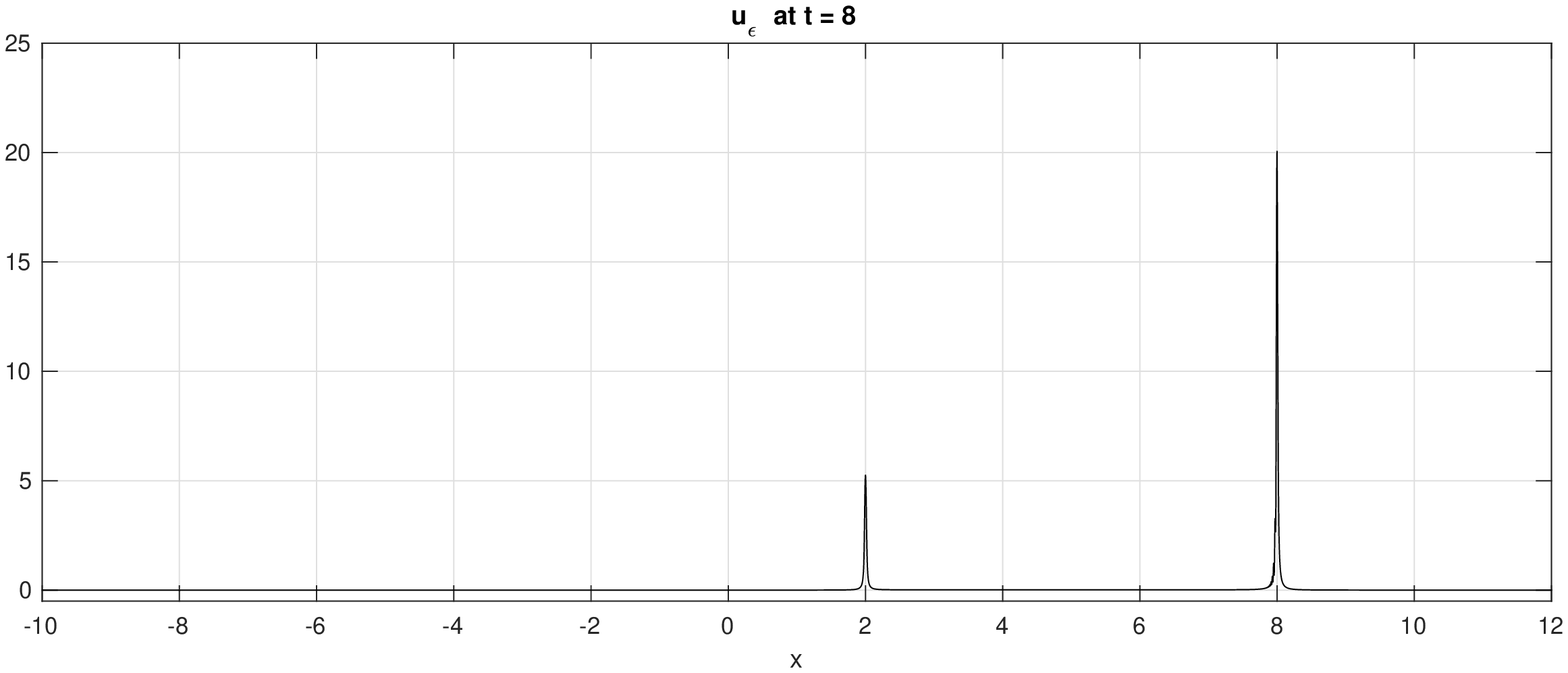}
\caption{In this plot we see the solution $u_\epsilon$ of the regularised problem \eqref{EQ: CPbb} for $\epsilon=0.01$ and initial data \eqref{Cauchydata} with $e=0.01$. }
\label{propagationU5}
\end{figure}

\subsection{Conclusions for the numerical part}

The numerical experiments demonstrate that the approximation techniques work well also in the situation when the strict mathematical formulation of the problem is difficult within the classical theory of distributions. The notion of very weak solutions eliminates this difficulty yielding the well-posedness results for equations with singular coefficients. Within this approach (of very weak solutions) one can recover the expected physical properties of the equation, for example the propagation profile and the decay of the sup-norm of solutions for large times.
Moreover, we seem to discover a new interesting phenomenon, presented in
Figure \ref{propagationU2} and analysed further in Figures \ref{propagationU3}-\ref{propagationU5}: the appearance of a new (reflective) wave (shortly) after the singular time, travelling in the direction opposite to the main one. This can be explained as an {\em echo effect} in the original acoustic problem produced at the interfaces of discontinuity of the medium. Moreover, the reflected wave appears to be of the same regularity as the original one, just smaller in amplitude. Therefore, this phenomenon is different from the one appearing in conical refraction in the presence of multiple characteristics.

\bigskip
Conflict of Interest: The authors declare that they have no conflict of interest.

\clearpage

\end{document}